\numberwithin{equation}{section}
\newtheorem{theorem}{Theorem}[section]
\newtheorem{lemma}[theorem]{Lemma}
\newtheorem{proposition}[theorem]{Proposition}
\theoremstyle{definition}
\newtheorem{remark}[theorem]{Remark}
\newenvironment{notation}{\medskip \noindent{\bf Notation. }}{}
\newcommand\R{{\mathbb R}}
\newcommand\C{{\mathbb C}}
\newcommand\N{{\mathbb N}}
\newcommand\dist{{\mathrm{d}}}
\newcommand\Srn{{\mathcal S}(\R^N )}
\newcommand\Sseul{{\mathcal S}}
\newcommand\Fou{{\mathcal F}}
\newcommand\Fmu{{\mathcal F}^{-1}}
\newcommand\Loc{{\mathrm{loc}}}
\newcommand\MScN[1]{\href{http://www.ams.org/mathscinet-getitem?mr=#1}{\nolinkurl{(#1)}}}
\newcommand\DOII[1]{\href{http://dx.doi.org/#1}{(doi: \nolinkurl{#1})}}
\newcommand\LINK[1]{\href{#1}{(link: \nolinkurl{#1})}}
\newcommand\NL{{\mathcal N}}
\newcommand\LI{{\mathcal M}}
\newcommand\LINL{{\mathcal A}}
\newcommand\Spa{{\mathcal X}}
\newcommand\Spb{{\mathcal Y}}
\newcommand\Ens{{\mathcal E}}
\newcommand\SpaD{{\widetilde{{\mathcal X}}}}
\newcommand\SpbD{{\widetilde{{\mathcal Y}}}}
\newcommand\mns{-}
\newcommand\Cstu{\mu _1}
\newcommand\Cstd{\mu _2}
\newcommand\Cstt{\mu _3}
\newcommand\CDiracu{\ell}
\newcommand\CDiracd{{2^\ell}}
\title[Local smooth solutions]{Local smooth solutions of the nonlinear Klein-gordon equation}
\author[Thierry Cazenave and Ivan Naumkin]{}
\subjclass {Primary 35L70; secondary 35L60, 35A01, 35B65}
\keywords{Nonlinear Klein-Gordon equation, Nonlinear wave equation, Nonlinear Dirac equation, local existence, smooth solutions, non-vanishing solutions}
\email{\href{mailto:thierry.cazenave@sorbonne-universite.fr}{thierry.cazenave@sorbonne-universite.fr}}
\email{\href{mailto:ivan.naumkin@iimas.unam.mx}{ivan.naumkin@iimas.unam.mx}}
\thanks{Ivan Naumkin is a Fellow of Sistema Nacional de Investigadores. The research was partially supported by project PAPIIT IA101820}
\begin{document}

\maketitle

\centerline{\scshape Thierry Cazenave}
\medskip
{\footnotesize
 \centerline{Sorbonne Universit\'e, CNRS, Universit\'e de Paris} 
  \centerline{Laboratoire Jacques-Louis Lions, B.C. 187}
   \centerline{4 place Jussieu, 75252 Paris Cedex 05, France}}
\medskip
\centerline{\scshape  Ivan Naumkin}
{\footnotesize \centerline{Departamento de F\'{\i}sica Matem\'{a}tica}
 \centerline{Instituto de Investigaciones
en Matem\'{a}ticas Aplicadas y en Sistemas}
 \centerline{Universidad Nacional Aut\'{o}noma
de M\'{e}xico}
   \centerline{Apartado Postal 20-126, Ciudad de M\'{e}xico, 01000, M\'{e}xico}
}

\bigskip

\begin{abstract}
Given any $\mu _1, \mu _2\in {\mathbb C}$ and $\alpha >0$, we prove the local existence of arbitrarily smooth solutions of the nonlinear Klein-Gordon equation $\partial  _{ tt } u - \Delta u + \mu _1 u = \mu _2  |u|^\alpha u$ on ${\mathbb R}^N$, $N\ge 1$, that do not vanish, i.e. $ |u (t,x) | >0 $ for all $x \in {\mathbb R}^N$ and all sufficiently small $t$.
We write the equation in the form of a first-order system associated with a pseudo-differential operator, then use a method adapted from~[Commun. Contemp. Math.  {\bf 19} (2017), no. 2, 1650038]. 
We also apply a similar (but simpler than in the case of the Klein-Gordon equation) argument to prove an analogous result for a class of nonlinear Dirac equations.
\end{abstract}

%%%%

\section{Introduction}

We study the local existence of smooth solutions for the nonlinear Klein-Gordon equation on $ \R^N $, $N\ge 1$, 
\begin{equation} \label{KGCauchy}
\begin{cases} 
w_{tt} - \Delta  w +  \Cstu w = \Cstd  \vert w \vert ^{\alpha}w, \\
w\left(  0,x\right)  =w_{0}\left(  x\right)  \text{, }w_{t}\left(  0,x\right)
=w_{1}(x),
\end{cases} 
\end{equation}
 where $\alpha>0$ and $ \Cstu ,  \Cstd \in \C $. Note that if $\Cstu =0$, then~\eqref{KGCauchy} is in fact the nonlinear wave equation. 
 
Local well-posedness of the Cauchy problem~\eqref{KGCauchy} in the energy space $H^1 (\R^N ) \times L^2 (\R^N ) $
is established in~\cite{GV85, GV89} in the subcritical case $ (N-2) \alpha <4$.
Local existence in $H^s (\R^N ) \times H^{s-1} (\R^N ) $ with $0\le s< \frac {N} {2}$ is established in~\cite{pecherW}, provided $( N- 2s )\alpha \le 4$, and also that the nonlinearity is sufficiently smooth. Typically, it is required that $\alpha >[s-1]$. 
There are many more references on this topic but, as far as we are aware, they do not cover the case of all powers $\alpha >0$ in all spatial dimensions $N\ge 1$.
Under appropriate assumptions on $\Cstu, \Cstd$ and $\alpha $, it is known that the solutions of~\eqref{KGCauchy} are global and scatter as $  |t| \to \infty $ (i.e., they behave like solutions to the linear equation) for small initial values (low energy scattering), or for all initial values (asymptotic completeness). 
For asymptotic completeness, see for instance~\cite{MorawetzS, Brenner1, Brenner2, GV85-2, GV89-2, Nakanishi}.
There is a considerable literature on the subject, of which we mention only a small fraction. 
For low energy scattering, see for instance~\cite{Strauss1, Strauss2, Pecher1, Pecher2, Klainerman85, shatah, ozawa,Moriyama97, Mori-Tone-Tsutsu97,Katayama99,delortfang2004}.
In particular, one expects low energy scattering when $\alpha >\frac {2} {N}$. In the case $\alpha =\frac {2} {N}$ one expects low energy modified scattering, i.e. that small solutions of~\eqref{KGCauchy} behave as linear
solutions modulated by a phase. 
This is known in space dimensions $N=1, 2$. See~\cite{HaNa3} for the case $\alpha >\frac {2} {N}$, and~\cite{Delort01,LindbladSoffer1, HaNa, Masaki} for the case $\alpha =\frac {2} {N}$. (See also
\cite{NaumkinKGD,NaumkinKGN} for the corresponding initial-boundary value problems.) 
In higher space dimensions $N\geq3$, up to our knowledge,  the best available result is the existence of scattering for power nonlinearities $\alpha\geq\alpha_{0}$ for some $\alpha_{0}>2/N$. Therefore, there is a gap
between the expectation and what is actually known.

In this paper, we construct for every $N\ge 1$ and $\alpha>0$ a class of initial values for
which there exists a local, \textit{highly-regular}, \textit{non-vanishing} solution of~\eqref{KGCauchy}. The difficulty is that, since $\alpha>0$ can be small with respect to the required regularity, the nonlinearity is not smooth enough. See \cite{CDWreg, CazNau} for a discussion on this regularity issue.
In~\cite{CazNau,CazNau1} we proved similar results for the Schr\"o\-din\-ger equation with nonlinearity $ |u|^\alpha u$. 
These results were useful, via the pseudo-conformal transformation, to study the scattering problem for NLS with $\alpha\geq2/N$ close to the critical power $\alpha=2/N$. The highly-regular
solutions were also used to prove local existence for the generalized
derivative Schr\"{o}dinger equation \cite{Linares,Linares1} and to the
generalized Korteweg-de Vries equation \cite{Linares2}.
We expect that the results in the present paper will be useful to derive similar results for the nonlinear
Klein-Gordon equation~\eqref{KGCauchy}.

Before stating our
results, we introduce some notation taken from~\cite{CazNau}. We fix $\alpha>0$, we consider three
integers ${k},{m},{n}$ such that
\begin{equation}
{k}>\frac{N}{2},\quad{n}>\max\left\{  \frac{N}{2}+1,\frac{N}{2\alpha}\right\}
,\quad2{m}\geq{k}+{n}+3 \label{fDInt1}%
\end{equation}
and we let
\begin{equation}
{J}=2{m}+2+{k}. \label{fSpa1b1}%
\end{equation}
Let $d\geq1.$ We define the space $ \Spa _{d}$ by
\begin{align}
 \Spa _{d}  &  =\left\{  u\in H^{{J}}( \R^N , \C
^{d});\,\langle x\rangle^{{n}}D^{\beta}u\in L^{\infty}( \R
^{N}, \C ^{d})\text{ for }0\leq|\beta|\leq2{m-2;}\right. \nonumber\\
&  \left.  \langle x\rangle^{{n}}D^{\beta}u\in L^{2}({ \R }%
^{N} ,  \C ^{d})\text{ for }2{m}-1 \leq\left\vert \beta\right\vert \leq
J\right\}  . \label{fSpa1}%
\end{align}
and we equip $ \Spa _{d}$ with the norm
\begin{equation}
\Vert u\Vert_{ \Spa _{d}}=\sum_{{  |\beta | }=0}^{2{m-2}} 
\Vert\langle x\rangle^{{n}}D^{\beta}u\Vert_{L^{\infty}({ \R }%
^{N},  \C ^{d})}+\sum_{|\beta|=2{m}-1} ^J \Vert\langle x\rangle^{{n}}D^{\beta
}u\Vert_{L^{2}( \R^N ,  \C ^{d})} \label{fSpa2}%
\end{equation}
where
\begin{equation*} 
\langle x\rangle=(1+|x|^{2})^{\frac{1}{2}}.
\end{equation*} 
By standard considerations it follows that $( \Spa _{d}
,\Vert\cdot\Vert_{ \Spa _{d}})$ is a Banach space.
Moreover, $2{n}>N+2$ by~\eqref{fDInt1}, so that $\Vert\langle x\rangle w\Vert_{L^{2}}\leq C\Vert\langle x\rangle^{n} w\Vert_{L^{\infty}}$; and so
\begin{equation} \label{fSpa1:b2}
 \Spa _{d}\hookrightarrow H^{J}( \R^N ,  \C ^{d}).
\end{equation}
It is straightforward to check that $ \Sseul ({ \R }
^{N}, \C ^{d})\subset \Spa _{d}$,  and that  $z\langle x\rangle^{-p}
\in \Spa _{d}$ for all $p\geq{n}$ and $z\in \C ^{d}.$

Our main result for equation~\eqref{KGCauchy} is the following.

\begin{theorem}
\label{eThm1b1} 
Let $\alpha>0$ and $  \Cstu , \Cstd  \in{ \C }$. 
Assume~\eqref{fDInt1}-\eqref{fSpa1b1} and let $ \Spa _{1}$ be defined
by~\eqref{fSpa1}-\eqref{fSpa2} with $d=1$. 
Let $w_0, w_1\in H^1 (\R^N ) \times L^2 (\R^N ) $
satisfy $w_{0}\in \Spa _{1}$, $\left\langle i\nabla\right\rangle
^{-1}w_{1}\in \Spa _{1}$, where $  \langle i\nabla \rangle = (I- \Delta )^{\frac {1} {2}} = \Fmu \left\langle
\xi\right\rangle \Fou $. If
\begin{equation} \label{BoundBelow}
\inf_{x\in \R^N }\langle x\rangle^{n} (   \vert w_{0} (
x )   \vert + \vert  \langle i\nabla \rangle ^{-1}
w_{1} (  x )   \vert  )  >0, 
\end{equation}
then there exist $T>0$ and a unique solution  $w\in C([-T,T], \Spa _{1})$ of~\eqref{KGCauchy}. 
In addition, if 
\begin{equation}
\inf_{x\in \R^N }\langle x\rangle^{n}\left\vert w_{0}\left(
x\right)  \right\vert >0, \label{14}%
\end{equation}
then there exists $0<T_{1}<T $ such that the solution $w (  t )  $ does not
vanish for all $ \vert t \vert \leq T_{1}$, more precisely there exists $\eta>0$ such
that
\begin{equation} \label{15} 
\inf  _{ -T_1 \le t \le T_1 } \inf_{x\in \R^N }\langle x\rangle^{n} \vert w (  t,x )
 \vert \geq\eta .
\end{equation}
\end{theorem}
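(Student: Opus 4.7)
The plan is to convert~\eqref{KGCauchy} into a first-order pseudo-differential Cauchy problem for a complex unknown, and then run a Banach fixed-point argument inspired by~\cite{CazNau}. Setting $A=\langle i\nabla\rangle=(I-\Delta)^{1/2}$ and introducing
\[
u(t,x) = w(t,x) + i A^{-1} w_t(t,x),
\]
a direct computation gives that $w$ solves~\eqref{KGCauchy} if and only if
\[
i u_t = A u + (\mu_1-1) A^{-1} \operatorname{Re}(u) - \mu_2 A^{-1}\bigl(|\operatorname{Re}(u)|^\alpha \operatorname{Re}(u)\bigr),\qquad u(0)=u_0:=w_0+iA^{-1}w_1.
\]
Since $|u_0|^2 = w_0^2 + (A^{-1}w_1)^2 \ge \tfrac12(|w_0|+|A^{-1}w_1|)^2$, hypothesis~\eqref{BoundBelow} is equivalent to $\inf_{x}\langle x\rangle^n|u_0(x)|>0$, while the assumptions on $(w_0,w_1)$ place $u_0$ in the complex-valued space $\Spa_1$ (i.e.\ $\Spa_d$ with $d=1$).

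The corresponding integral equation is
\[
u(t) = e^{-itA}u_0 - i \int_0^t e^{-i(t-s)A} \NL(u(s))\,ds,
\]
where $\NL(u) = (\mu_1-1)A^{-1}\operatorname{Re}(u) - \mu_2 A^{-1}(|\operatorname{Re}(u)|^\alpha\operatorname{Re}(u))$. I would run a contraction for this map on a closed subset of $C([-T,T],\Spa_1)$ of the form
\[
\bigl\{u\in C([-T,T],\Spa_1) : \|u(t)-e^{-itA}u_0\|_{\Spa_1}\le r \text{ and } \langle x\rangle^n|u(t,x)|\ge \tfrac{\eta}{2} \text{ for all }(t,x)\bigr\},
\]
equipped with the uniform $\Spa_1$ metric. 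The action of the unitary group $e^{-itA}$ on $\Spa_1$ is controlled via commutator estimates $[e^{-itA},\langle x\rangle^n D^\beta]$; since $\langle\xi\rangle$ is a smooth symbol of order one, such commutators are bounded by standard pseudodifferential calculus, which yields both the continuity of $t\mapsto e^{-itA}u_0$ in $\Spa_1$ and the analogous bound on the Duhamel term.

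The crux of the argument, and the step I expect to require most work, is the Lipschitz bound on $\NL$ from the above set into $C([-T,T],\Spa_1)$, because $z\mapsto |z|^\alpha z$ has only $[\alpha]+1$ bounded derivatives and is in general far from being of class $C^J$ when $\alpha$ is small. The split structure of $\Spa_d$ in~\eqref{fSpa1}-\eqref{fSpa2} is designed exactly for this: for derivatives of order $|\beta|\le 2m-2$ one uses weighted $L^\infty$ estimates, for which the pointwise lower bound $\langle x\rangle^n|u|\ge \eta/2$ permits a Fa\`a-di-Bruno-type chain rule that is sound away from the singularity of $|\cdot|^\alpha$; for $2m-1\le|\beta|\le J$ only weighted $L^2$ bounds are required, which are obtained via Moser/Leibniz-type inequalities at limited regularity, aided by the one-derivative smoothing of $A^{-1}$. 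The numerology in~\eqref{fDInt1}-\eqref{fSpa1b1}, in particular $n>N/(2\alpha)$ and $2m\ge k+n+3$, is calibrated precisely so that these two regimes match and both close in the same space. Once this Lipschitz bound is in hand, a standard contraction argument gives a unique $u\in C([-T,T],\Spa_1)$, and $w=\operatorname{Re}(u)\in C([-T,T],\Spa_1)$ is the required solution of~\eqref{KGCauchy}.

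Finally, the non-vanishing conclusion~\eqref{15} under the stronger hypothesis~\eqref{14} is a direct consequence of continuity in time. Indeed, \eqref{14} provides some $\eta_0>0$ with $\langle x\rangle^n|w_0(x)|\ge \eta_0$; the embedding $\Spa_1\hookrightarrow C_b(\R^N)$ together with the definition of the $\Spa_1$-norm implies that $\langle x\rangle^n w\in C([-T,T],C_b(\R^N))$, so there exists $T_1\in(0,T)$ such that $\sup_{|t|\le T_1,\,x}\langle x\rangle^n|w(t,x)-w_0(x)|<\eta_0/2$. This yields~\eqref{15} with $\eta=\eta_0/2$.
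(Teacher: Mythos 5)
Your reduction of \eqref{KGCauchy} to a scalar first-order equation is where the proof breaks down. You set $u=w+iA^{-1}w_t$ with $A=\langle i\nabla\rangle$, and then claim that $w=\operatorname{Re}(u)$ and write the nonlinearity as $|\operatorname{Re}(u)|^\alpha\operatorname{Re}(u)$. This is only valid when $w$ and $w_t$ are real-valued. But the theorem is for $\mathbb{C}$-valued data: $w_0, \langle i\nabla\rangle^{-1}w_1 \in \Spa_1$, which is a space of $\mathbb{C}$-valued functions, and $\Cstu,\Cstd\in\mathbb{C}$, so even for real initial data the solution does not stay real. For complex $w$ one has $\operatorname{Re}(u)=\operatorname{Re}(w)-\operatorname{Im}(A^{-1}w_t)$, which is not $w$; there is no algebraic way (without conjugation) to recover $w$ from the single scalar $u$. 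The pair $(w,w_t)$ is a genuinely two-complex-dimensional state, and this is exactly why the paper passes to the two-component unknown $u=\tfrac12(w\mathbf a+i[A^{-1}w_t]\mathbf b)\in\mathbb{C}^2$ with the diagonal Pauli matrix $\gamma$: then $w=\mathbf a\cdot u$ and $A^{-1}w_t=-i\,\mathbf b\cdot u$ are linear (conjugation-free) in $u$, the system $i\partial_t u-\gamma\langle i\nabla\rangle u=\LI(u)+\NL(u)$ is closed, and $\gamma\langle i\nabla\rangle$ is self-adjoint so the group is unitary.

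A closely related problem is your claim that \eqref{BoundBelow} is equivalent to $\inf_x\langle x\rangle^n|u_0(x)|>0$ via $|u_0|^2=w_0^2+(A^{-1}w_1)^2$. That identity only holds for real $w_0,A^{-1}w_1$; for complex data the scalar $u_0=w_0+iA^{-1}w_1$ can vanish even when $|w_0|+|A^{-1}w_1|>0$ (take $w_0=1$, $A^{-1}w_1=i$). In the two-component setting one instead gets $|u_0|^2=\tfrac12(|w_0|^2+|A^{-1}w_1|^2)$, which is why the paper's vector lower bound is genuinely equivalent to \eqref{BoundBelow}. Finally, your dismissal of the linear estimates as ``standard pseudodifferential calculus'' conceals real work: the paper's Lemmas~\ref{eLE1}, \ref{BE}, \ref{eLE3:0} and Proposition~\ref{eLE3} are needed precisely to get, with explicit polynomial growth in $t$, that $e^{-it\gamma\langle i\nabla\rangle}$ maps $\Spa$ to $\Spa$ and satisfies the small-time estimate~\eqref{eLE3:12}; these are used quantitatively in the contraction. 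Your continuity argument for~\eqref{15} is fine once the existence part is set up correctly, and your outline of the two-regime nonlinear estimate is in the right spirit, but the scalar $\operatorname{Re}(u)$ reduction must be replaced by the paper's $\mathbb{C}^2$ system to proceed.
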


\begin{remark}
\label{eRem2}
Here are some comments on Theorem~$\ref{eThm1b1}$.
\begin{enumerate}[{\rm (i)}] 

\item \label{eRem2:2} The parameters ${k},{m},{n}$ are arbitrary as long as they
satisfy~\eqref{fDInt1}. In particular, ${n}$ can be any integer satisfying the
second condition in~\eqref{fDInt1}.

\item \label{eRem2:5} There are no restrictions on the size of the initial
value in Theorem~\ref{eThm1b1}. Besides the smoothness and decay imposed by
the assumption $w_{0} ,  \langle i\nabla \rangle ^{-1}w_{1}\in{\mathcal{X}} _{1}$, the only limitation is condition~\eqref{BoundBelow}.

\item Theorem~{\ref{eThm1b1}} applies to the initial data
\begin{equation*} 
w_{0} (x) =z_{0}(\langle x\rangle^{-{n}}+\psi_{0} (x) ), \quad w_{1} (x) =z_{1} \langle
i\nabla \rangle (\langle x\rangle^{-{n}}+\psi_{1}) (x) ,
\end{equation*} 
where $z_{j}\in \C $, $j=1,2$,  ${n}>\max\{\frac{N}{2}+1,\frac
{N}{2\alpha}\}$, and $\psi_{j}\in \Srn $ satisfies
$\| \langle \cdot \rangle^{{n}}\psi_{j} \|_{L^{\infty}}<1$, for $j=1,2$.

\item We cannot guarantee that the solution $w (  t )  $ does not
vanish if condition \eqref{14} is not fulfilled. 
We do not know if condition \eqref{14} is necessary. 
\end{enumerate}
\end{remark}

\textit{Comments on the proof of Theorem~$\ref{eThm1b1}$}. We do not work
directly with equation \eqref{KGCauchy}. Instead, we reduce it to a first
order in time system and study the resulting ``half-wave equation". Namely,
given $w_{0},w_{1}$ let
\begin{equation*} 
u_{0}=2^{-1} ( w_0  \mathbf{a} +i [ \langle i\nabla
\rangle ^{-1}w_{1} ] \mathbf{b})  ,
\end{equation*} 
where
\begin{equation} \label{fDfnab} 
\mathbf{a} = 
\begin{pmatrix} 
1 \\1
\end{pmatrix} 
\quad 
\mathbf{b} = 
\begin{pmatrix} 
1 \\ -1
\end{pmatrix} 
\end{equation} 
We consider the first order system
\begin{equation} \label{Halfwave}
\begin{cases} 
i \partial_{t}u-\gamma\left\langle i\nabla\right\rangle u= \LI (u) +  \NL(
u)  ,\\
u (  0 )  =u_{0},
\end{cases} 
\end{equation}
where  $\gamma$
is the Pauli matrix
\begin{equation} \label{gamma}
\gamma= 
\begin{pmatrix}
1 & 0\\
0 & -1
\end{pmatrix} , 
\end{equation}
and 
\begin{equation} \label{fdefnnl} 
\LI (u) =  \frac { \Cstu -1 } {2} [ \langle i\nabla \rangle ^{-1} (   \mathbf{a} \cdot u  ) ] \mathbf{b}, \quad \NL (u) = - \frac { \Cstd  } {2} [ \langle i\nabla \rangle ^{-1} ( | \mathbf{a} \cdot u |^\alpha  \mathbf{a} \cdot u  ) ] \mathbf{b} .
\end{equation} 
In particular, $\gamma \langle i\nabla \rangle $ with domain $H^1 (\R^N , \C^2 ) $ is self-adjoint  on $L^2 (\R^N , \C^2 ) $,
and $(e^{it\gamma \langle i\nabla \rangle })  _{ t\in \R }$ is a group of isometries on $L^2 (\R^N , \C^2 ) $. Moreover, since $\gamma \langle i\nabla \rangle$ commutes with any power of $(I - \Delta )$, $(e^{it\gamma \langle i\nabla \rangle })  _{ t\in \R }$ is also a group of isometries on  $H^s (\R^N , \C^2 ) $ for all $s\in \R$.
We can rewrite \eqref{KGCauchy} as \eqref{Halfwave} by letting (see~\cite{HaNa1})
\begin{equation}  \label{transform}
u=\frac{1}{2} ( w  \mathbf{a} + i [ \langle i\nabla \rangle
^{-1}\partial_{t}w]  \mathbf{b}  )  .
\end{equation}
Moreover, $w$ is given in terms of $u$ by
\begin{equation*} 
w=\mathbf{a} \cdot u .
\end{equation*} 
Therefore, we concentrate our attention on the problem
\eqref{Halfwave}. Similarly to the non-relativistic case of the
Schr\"{o}dinger equation~\cite{CazNau}, we observe that the possible defect of
smoothness of the nonlinearity $ \NL \left(  u\right)  $ is only at
$u=0$. This observation suggests to look for a solution to \eqref{Halfwave}
that does not vanish. We follow the approach of~\cite{CazNau} to construct such
solutions. In order to explain our strategy, we consider as initial data the
case of the concrete function $\psi(x)= \langle x\rangle^{-{n}} \mathbf{a}$,
where ${n}>\frac{N}{2}+1.$ For this choice of $n$ it follows that $\psi\in
H^{1}( \R^N  ,  \C ^{2})$. Let $v(t)=e^{it\gamma \langle
i\nabla \rangle }\psi$ be the solution of the linear problem
\begin{equation} \label{Low2}
\begin{cases}
iv_{t}=\gamma\left\langle i\nabla\right\rangle v,\\
v(0,x)=\psi(x).
\end{cases}
\end{equation}
We want to control $\inf_{x\in \R^N }\langle x\rangle^{n}|v(t,x)|$.
For this purpose we integrate \eqref{Low2} on $[0,t]$, and we obtain
\begin{equation}
v(t,x)=\psi(x)-i\gamma\int_{0}^{t}\left(  \left\langle i\nabla\right\rangle
v\right)  (s,x)\,ds. \label{fInt1}%
\end{equation}
Hence,
\begin{equation}
\inf_{x\in \R^N }\langle x\rangle^{n}|v(t,x)|\geq\inf_{x\in
 \R^N }\langle x\rangle^{n}|\psi(x)|-t\Vert\langle x\rangle
^{n}\left\langle i\nabla\right\rangle v\Vert_{L^{\infty}((0,t)\times
 \R^N )}. \label{Low4}
\end{equation}
It follows that in order to control $\inf_{x\in \R^N }\langle
x\rangle^{n}|v(t,x)|$ from below, we need to estimate the last term on the
right-hand side of~\eqref{Low4}. In \cite{CazNau} we use Taylor's formula with
integral remainder involving derivatives of the solution $v$ of sufficiently
large order, which we estimate in the Sobolev space $H^{s}$ where $s>\frac
{N}{2}$ and $k$ is sufficiently large. In the case of equation \eqref{fInt1}
we have
\begin{equation}
\Vert\langle x\rangle^{n}\left\langle i\nabla\right\rangle v\Vert_{L^{\infty}%
}=\Vert\langle x\rangle^{n}\left\langle i\nabla\right\rangle ^{-1}\left(
1- \Delta \right)  v\Vert_{L^{\infty}}. \label{19}%
\end{equation}
The Taylor's formula applied to $v(t)=e^{it\gamma\left\langle i\nabla
\right\rangle }\psi$ yields terms of the form $\left\langle i\nabla
\right\rangle ^{{j}}\psi$ and a remainder involving the operator $\left(
1- \Delta \right)  ^{{\ell }},$ for some $\ell $ large. To estimate the terms
$\left\langle i\nabla\right\rangle ^{{j}}\psi,$ we need to control the
pseudo-differential operator $\left\langle i\nabla\right\rangle ^{{j}}$. This
is done in Lemma \ref{BE} below by using the theory of Bessel potentials.
In turn, expanding $\left(  1- \Delta \right)  ^{{\ell }}$ to extract the part
corresponding to $v$ itself, for small times we can estimate the solution in
terms of its higher-order derivatives. Hence, we can control $v$ in terms of
derivatives of $\psi$ plus a high-order derivative of $v,$ $\sup_{\left\vert
\beta\right\vert = \ell }\Vert\langle x\rangle^{n}D^{\beta}v\Vert_{L^{\infty}},$
for some $ \ell $ large, which is estimated via the Sobolev's embedding
$H^{s}\hookrightarrow L^{\infty}$ for $s>\frac{N}{2}.$ This first step is
achieved in Lemma~\ref{eLE1} below. In order to control $\sup_{\left\vert
\beta\right\vert  = \ell }\Vert\langle x\rangle^{n}D^{\beta}v\Vert_{H^{s}}$ we use
energy estimates. Since the equation for $v$ involves a first order
pseudo-differential operator, we can estimate $\langle x\rangle^{n}D^{\beta}v$
in terms of $D^{\beta}v,$ which then can be controlled by usual energy
estimates. This second step is achieved in Lemma~\ref{eLE3:0} below. As in the
non-relativistic case of \cite{CazNau}, the corresponding space  $ \Spa _2$ involves weighted
$L^{\infty}$-norms of the derivatives of the function up to a certain order,
then weighted $L^{2}$-norms of the derivatives of higher order.\ However, we
stress that in the present case we are able to close the estimates using lower
order derivatives, compared with the case of the Schr\"{o}dinger equation.
Combining Lemmas~\ref{eLE1} and~\ref{eLE3:0} we obtain the linear estimate
that we use in this paper. This estimate is presented in
Proposition~\ref{eLE3} below. In particular, it follows from the linear
estimate that if  $\langle x\rangle^{n}|\psi |$ is bounded from below, then the function
$\langle x\rangle^{n}|v(t,x)|$ remains positive for small times. The nonlinear
estimate is provided by Proposition~\ref{eNL1} below. We show that we can
close the nonlinear estimates in the space $ \Spa _2$ via the control of
the Bessel potential provided by Lemma \ref{BE}. Using the linear and
nonlinear estimates, we prove the existence of a solution $u\in \Spa _2$
for \eqref{Halfwave} by a contraction mapping argument. This is the result of
Proposition~\ref{eNLS1} below. Then, Theorem~\ref{eThm1b1} follows from the
transformation \eqref{transform}, which relates the problems \eqref{KGCauchy} and \eqref{Halfwave}.

It turns out that the method we use to study~\eqref{KGCauchy} similarly applies to the nonlinear Dirac equation%
\begin{equation}  \label{Dirac}
\begin{cases} 
i\Psi_{t} = H\Psi+ \Cstt   | \Psi | ^{\alpha}  \Psi,\\
\Psi (  0,x )  =\Psi_{0} (  x ),
\end{cases} 
\end{equation}
on $\R^N $. Here, $ \Cstt \in \C $, $\Psi (
t,x )  \in\C ^{ \CDiracd }$, where 
\begin{equation} \label{fDfnell} 
 \CDiracu = \Bigl[ \frac{N+1}{2} \Bigr] , 
\end{equation} 
 with $[x]$  the
integer part of $x$, and  $\langle \cdot.\cdot \rangle $ denotes
 the scalar product in $ \C ^{n}$. The $N$-dimensional free
Dirac operator $H$ is defined by
\begin{equation} \label{HDirac}
H= - i \sum_{k=1}^{N} \gamma_{k} \partial_{k}+ \eta, 
\end{equation}
where the $ \CDiracd \times \CDiracd $ Hermitian matrices $\gamma_{k},\eta$ satisfy the
anticommutation relations 
\begin{equation}  \label{anti-commutation}
\begin{cases} 
\gamma_{j}\gamma_{k}+\gamma_{k}\gamma_{j}=2\delta_{jk} I , & j,k\in\{1,2,...,N\}, \\
\gamma_{j}\eta+\eta\gamma_{j}=0 ,\\
\eta^{2}=I .
\end{cases} 
\end{equation} 
If 
\begin{equation*} 
\sigma_{1}=%
\begin{pmatrix}
0 & 1\\
1 & 0
\end{pmatrix}
,\quad \sigma_{2}=%
\begin{pmatrix}
0 & -i\\
i & 0
\end{pmatrix}
,\quad \sigma_{3}=%
\begin{pmatrix}
1 & 0\\
0 & -1
\end{pmatrix} ;
\end{equation*} 
are the Pauli matrices, then a standard choice in dimension one is $\gamma _1= \sigma _1$, $\eta=\sigma_{3}$.
In the case of dimension two, the usual convention is $ \gamma_{1}=\sigma_{1}$, $\gamma_{2}=\sigma_{2}$,  $\eta=\sigma_{3}$. 
 In dimension three, the standard convention is
\begin{equation*} 
\gamma_{j}=%
\begin{pmatrix}
0 & \sigma_{j}\\
\sigma_{j} & 0
\end{pmatrix}
,\text{ \ }1\leq j\leq3, \quad \eta=%
\begin{pmatrix}
I  & 0\\
0 & -I %
\end{pmatrix}
,
\end{equation*} 
where in the above formula $I$ is the $2\times 2$ identity matrix.
In higher dimensions $N\geq4,$ the full set of explicit matrices $\gamma
_{j},\eta$ satisfying the anti-commutation relations~\eqref{anti-commutation} 
can be constructed by iteration (see e.g. the Appendix in~\cite{KalfY}). 

The Dirac equation with power nonlinearity like~\eqref{Dirac} was studied in particular in~\cite{DiasF, Escveg, Tzvetkov, Machihara1, Machihara2}. However,  to the best of our knowledge, and as for equation~\eqref{KGCauchy}, the available local well-posedness results do not cover the case of all powers $\alpha >0$ in all spatial dimensions $N\ge 1$.
Our main result for equation~\eqref{Dirac} is the following.

\begin{theorem} \label{ThDirac}
Let $\alpha>0$, $ \Cstt \in \C $, and let  $ \CDiracu $ be given by~\eqref{fDfnell}.
Assume~\eqref{fDInt1}-\eqref{fSpa1b1} and let $ \Spa _{\CDiracd}$ be defined
by~\eqref{fSpa1}-\eqref{fSpa2} with $d=\CDiracd$. 
Assume $\Psi_{0}\in \Spa  _{ \CDiracd } $ satisfies
\begin{equation*} 
\inf_{x\in \R^N }\langle x\rangle^{n}|\Psi_{0}(x)|>0.
\end{equation*} 
It follows that there exist $T>0$ and a unique solution $\Psi\in
C([-T,T], \Spa  _{ \CDiracd }) $ of~\eqref{Dirac}. In addition, there exists $\eta >0$ such that
\begin{equation*} 
\inf  _{ -T \le t\le T }\inf_{x\in \R^N }\langle x\rangle^{n} \vert \Psi (
t,x )   \vert \geq\eta .
\end{equation*} 
\end{theorem}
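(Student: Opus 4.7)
The plan is to write \eqref{Dirac} in Duhamel form
\begin{equation*}
\Psi(t) = e^{-itH}\Psi_0 - i\Cstt \int_0^t e^{-i(t-s)H}\bigl(|\Psi(s)|^{\alpha}\Psi(s)\bigr)\,ds
\end{equation*}
and to solve it by a contraction mapping argument on a closed subset of $C([-T,T], \Spa _{\CDiracd})$ made of functions $\Psi$ satisfying a uniform lower bound $\inf_{|t|\le T}\inf_{x\in\R^{N}}\langle x\rangle^{n}|\Psi(t,x)| \ge \eta/2$ for a suitable $\eta>0$. As in the Klein--Gordon argument, restricting to a non-vanishing set is what saves the analysis, since $z\mapsto|z|^{\alpha}z$ is $C^{\infty}$ away from the origin and the lack of smoothness for small $\alpha$ only occurs at $z=0$. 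From the anti-commutation relations~\eqref{anti-commutation} one immediately computes $H^{2}=I-\Delta=\langle i\nabla\rangle^{2}$, so $H$ is self-adjoint on $L^{2}(\R^{N},\C^{\CDiracd})$, commutes with every power of $(I-\Delta)$, and $(e^{-itH})_{t\in\R}$ is a group of isometries on every $H^{s}(\R^{N},\C^{\CDiracd})$. This plays the role that $\gamma\langle i\nabla\rangle$ plays for \eqref{KGCauchy}, except that here $H$ is a \emph{local} first-order differential operator with constant matrix coefficients, which is precisely why the Dirac argument is simpler than the Klein--Gordon one and why no half-wave reformulation is needed.

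The first main step is a linear estimate for $\Psi(t)=e^{-itH}\Psi_0$ analogous to Proposition~\ref{eLE3}. From the integrated form $\Psi(t,x)=\Psi_0(x)-i\int_{0}^{t}(H\Psi)(s,x)\,ds$, we obtain
\begin{equation*}
\inf_{x\in\R^{N}}\langle x\rangle^{n}|\Psi(t,x)|\ \ge\ \inf_{x\in\R^{N}}\langle x\rangle^{n}|\Psi_0(x)|\ -\ t\,\bigl\|\langle x\rangle^{n}H\Psi\bigr\|_{L^{\infty}((0,t)\times\R^{N})},
\end{equation*}
so everything reduces to bounding $\|\Psi(t)\|_{\Spa _{\CDiracd}}$ in terms of $\|\Psi_0\|_{\Spa _{\CDiracd}}$. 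A Taylor expansion in $t$ of $e^{-itH}\Psi_0$ produces terms of the form $H^{j}\Psi_0$ together with a remainder involving $(I-\Delta)^{m}\Psi(s)=H^{2m}\Psi(s)$; since the $\gamma_{k},\eta$ are constant matrices, each $H^{j}\Psi_0$ is a linear combination of the derivatives $D^{\beta}\Psi_0$ with $|\beta|\le j$ and thus directly enters the weighted $L^{\infty}$ part of $\|\cdot\|_{\Spa _{\CDiracd}}$ as defined by~\eqref{fSpa2}, with no need for the Bessel-potential machinery (Lemma~\ref{BE}) used in the Klein--Gordon case. The remainder is bounded via the Sobolev embedding $H^{s}\hookrightarrow L^{\infty}$, $s>N/2$, by the weighted $L^{2}$-norms of high derivatives making up the rest of $\|\cdot\|_{\Spa _{\CDiracd}}$. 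For these high-order weighted $L^{2}$-norms one uses an energy estimate in the spirit of Lemma~\ref{eLE3:0}: $H$ has constant coefficients so it commutes with every $D^{\beta}$, while the commutator $[H,\langle x\rangle^{n}]=-i\sum_{k}\gamma_{k}\partial_{k}(\langle x\rangle^{n})$ is a multiplication operator of order $\langle x\rangle^{n-1}$, yielding a Gronwall-type control of $\|\langle x\rangle^{n}D^{\beta}\Psi(t)\|_{L^{2}}$.

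The second main step is a nonlinear estimate, analogous to Proposition~\ref{eNL1}, showing that on $\{\Psi\in\Spa _{\CDiracd} : \inf_{x}\langle x\rangle^{n}|\Psi(x)|\ge \eta/2\}$ the map $\Psi\mapsto|\Psi|^{\alpha}\Psi$ is locally Lipschitz into $\Spa _{\CDiracd}$. On such a set $1/|\Psi|$ grows at most like $(2/\eta)\langle x\rangle^{n}$, so $|\Psi|^{\alpha}$ is a $C^{\infty}$ function of $\Psi$ and successive applications of the Leibniz and chain rules produce derivatives of $|\Psi|^{\alpha}\Psi$ controlled by powers of $\|\Psi\|_{\Spa _{\CDiracd}}$ and of $1/\eta$; the integrability condition $n>N/(2\alpha)$ built into~\eqref{fDInt1} is exactly what is needed to place the worst terms, which carry weights of the form $\langle x\rangle^{-n\alpha}$, into $L^{2}$. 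Combining the linear and nonlinear estimates, the Duhamel map becomes a strict contraction on the ball $\{\Psi : \|\Psi-e^{-itH}\Psi_0\|_{C([-T,T],\Spa _{\CDiracd})}\le R,\ \inf_{t,x}\langle x\rangle^{n}|\Psi|\ge \eta/2\}$ once $R$ and $T$ are small enough, which yields the unique solution together with the lower bound. The main obstacle I expect is the careful bookkeeping in the nonlinear estimate: one must verify that every derivative of $|\Psi|^{\alpha}\Psi$ up to order $J$ lands in the correct piece of the norm (weighted $L^{\infty}$ for orders $\le 2m-2$, weighted $L^{2}$ for orders $2m-1$ through $J$), with constants depending only on $\|\Psi\|_{\Spa _{\CDiracd}}$ and $1/\eta$. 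This is the same combinatorial analysis that has to be carried out for~\eqref{KGCauchy}, but it is somewhat lighter here because no $\langle i\nabla\rangle^{-1}$ appears in~\eqref{Dirac} and because $H$ itself is purely local.
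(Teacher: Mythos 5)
Your proposal is correct and follows essentially the same strategy as the paper: the paper proves Theorem~\ref{ThDirac} by Duhamel's formula and a contraction mapping argument (mirroring Proposition~\ref{eNLS1}), with the linear estimate of Proposition~\ref{eLE3Dirac} obtained exactly as you describe — Taylor expansion exploiting $H^{2}=I-\Delta$ for the weighted $L^{\infty}$ bound, plus a weighted $L^{2}$ energy estimate using the commutator $[\langle x\rangle^{2k},H]=2ki\langle x\rangle^{2k-2}\sum_{j}\gamma_{j}x_{j}$ — and with the nonlinear estimate of Proposition~\ref{DiracNonlinear} reduced to the same Leibniz-rule bookkeeping as Proposition~\ref{eNL1}, the condition $\alpha n>N/2$ handling the $\langle x\rangle^{-n\alpha}$ weight. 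You also correctly identify that the Dirac case dispenses with the Bessel-potential Lemma~\ref{BE} because $H$ is a local operator; the only cosmetic difference is that you center your contraction ball at $e^{-itH}\Psi_{0}$ while the paper centers at the origin with radius $K$.
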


\begin{remark} 
The Dirac equation is often studied with the nonlinearity $\langle \eta \Psi ,\Psi \rangle \eta \Psi $, in this case it is known as the Thirring model \cite{Thirring} (see \cite{Soler} for three space dimensions). 
Many results are available for the Thirring model, see for instance~\cite{MNO, MNNO, MNT, SelbergT, Candy1,pecherD, BejenaruHerr, Sasaki,BejenaruHerr1,Candy} (see also \cite{NaumkinDirac1,NaumkinDirac2}).
This type of nonlinearity is not accessible to the method we use in this paper. Indeed, we would need lower estimates of $ | \langle \eta \Psi ,\Psi \rangle |$, which do not follow immediately from the method we use to prove Proposition~\ref{eLE3Dirac}. 
\end{remark} 

\bigskip\textit{Comments on the proof of Theorem~$\ref{ThDirac}$.}
 The strategy of the proof is the same as in the case of the problem
\eqref{Halfwave} above. Indeed, let $v(t)=e^{itH}\psi$ be the solution of the
linear problem
\begin{equation*} 
\begin{cases}
iv_{t}=Hv,\\
v(0,x)=\psi(x).
\end{cases}
\end{equation*} 
Integrating, we have
\begin{equation*} 
v(t,x)=\psi(x)-\int_{0}^{t} (  Hv )  (s,x)\,ds.
\end{equation*} 
Hence,
\begin{equation*} 
\inf_{x \in  \R^N }\langle x\rangle^{n}|v(t,x)|\geq \inf_{x\in
 \R^N } \langle x\rangle^{n}|\psi(x)|-t\Vert\langle x\rangle
^{n}Hv\Vert_{L^{\infty}((0,t)\times \R^N )}. 
\end{equation*} 
As $H$ is a first-order differential operator, we are in a similar situation
as for (\ref{Halfwave}). In fact, the case of the Dirac equation results
somehow easier to handle, since there is no pseudo-differential operator involved.
See Proposition~\ref{eLE3Dirac} below. 

As mentioned earlier, a natural application of the above results would be to address the existence of
scattering or modified scattering for the nonlinear Klein-Gordon~\eqref{KGCauchy} and Dirac~\eqref{Dirac}  equations
 in any spatial dimensions $N\geq1,$ similar to the results
obtained in \cite{CazNau, CazNau1} for the case of the Schr\"{o}dinger
equation. 
This question is more challenging in these cases
because the corresponding conformal-type transforms involve 
pseudo-differential operators in the case of the Klein-Gordon equations, and
a system of equations in the case of the Dirac equation. Therefore, 
the action on the nonlinearity of these conformal-type
transforms results to be difficult to control. 

The rest of this paper is organized as follows. In Section~\ref{sLow} we
establish estimates for the group $(e^{it\gamma \langle
i\nabla \rangle })_{t\in \R }$ in the space $ \Spa _{2}$ (Proposition~\ref{eLE3}), and of the group $(e^{itH})_{t\in \R} $ in the space $\Spa _{ \CDiracd }$ (Proposition~\ref{eLE3Dirac}). 
In Section~\ref{sNL} we estimate the nonlinearity $ \NL (  u )  $ in $ \Spa _{2}$ and $ \NL _{1} (
\Psi )  = \Cstt  | \Psi | ^{\alpha}  \Psi$ in $\Spa _{\CDiracd }$. Finally, in Section~\ref{sNLS} we complete the proofs of
Theorems~\ref{eThm1b1} and~\ref{ThDirac}.

\begin{notation}
We denote by $L^{p}(U ,  \C ^d )$, for $1\leq p\leq\infty$ and
$U= \R^N $ or $U=(0,T)\times \R^N $, $0<T\leq\infty$, the
usual  $\C^d$-valued Lebesgue spaces. We use
the standard notation that $\Vert u\Vert_{L^{p}}=\infty$ if $u\in
L ^{1} _\Loc (U ,  \C ^{d})$ and $u\not \in L^{p}(U ,  \C %
^{d})$. $H^{s}( \R^N ,  \C ^{d})$, $s\in{ \R }$, is the
usual $\C^d$-valued Sobolev space. (See
e.g.~\cite{AdamsF} for the definitions and properties of these spaces.) 
We will often write $L^p (U)$ and $H^s (\R^N ) $ for $L^p (U, \C^d)$ and $H^s (\R^N , \C^d) $, respectively.
We denote by $(e^{ \mns it\gamma \langle i\nabla \rangle })_{t\in{ \R }%
}$ the group  associated to the equation~\eqref{Low2}.  
As is well known, $(e^{ - it\gamma\left\langle i\nabla\right\rangle }%
)_{t\in{ \R }}$ is a group of isometries on $L^{2}({ \R }%
^{N} ,  \C ^{2})$, and on $H^{s}( \R^N  ,  \C ^{2})$ for all
$s\in{ \R }$.
\end{notation}

\section{Weighted estimates for the linear equations} \label{sLow}

We first estimate the action of the group $(e^{ \mns it\gamma\left\langle
i\nabla\right\rangle })_{t\in{ \R }}$ on the space 
\begin{equation} \label{fDfnSpa} 
 \Spa= \Spa _{2} . 
\end{equation} 
We prove the following result.

\begin{proposition} \label{eLE3} 
Assume \eqref{fDInt1}-\eqref{fSpa1b1} with $\alpha=1$, and let
the space $ \Spa $ be defined by \eqref{fSpa1}-\eqref{fSpa2} and~\eqref{fDfnSpa}. It follows that $e^{ \mns it\gamma\left\langle i\nabla\right\rangle
}\psi\in C({ \R }, \Spa )$ for all $\psi \in \Spa$. Moreover, there exist 
$C>0$  and $t_{0}>0$ such that
\begin{equation} \label{eLE3:11} 
\Vert e^{ \mns it\gamma\left\langle i\nabla\right\rangle }\psi\Vert_{ \Spa  
}\leq C(1+|t|)^{2m +n +1} \Vert\psi\Vert_{ \Spa } , 
\end{equation}
and
\begin{equation} \label{eLE3:12}
\sup_{|\beta|\leq2{m}}\Vert\langle x\rangle^{n}D^{\beta}(e^{ \mns it\gamma
\left\langle i\nabla\right\rangle }\psi-\psi)\Vert_{L^{\infty}}\leq
C  | t | (1+  | t | )^ {2m +n +1} \Vert\psi\Vert_{ \Spa } ,
\end{equation}
for all $\left\vert t\right\vert \leq t_{0}$ and all $\psi\in \Spa $.
\end{proposition}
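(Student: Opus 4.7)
I would obtain Proposition \ref{eLE3} by adding two complementary estimates, one for the weighted $L^\infty$ part of $\|\cdot\|_\Spa$ (derivatives of order $\le 2m-2$), the other for the weighted $L^2$ part (derivatives of order between $2m-1$ and $J$). These are precisely the contents of Lemmas \ref{eLE1} and \ref{eLE3:0}, which are set up in the paper for this purpose. Summing them yields \eqref{eLE3:11}, while \eqref{eLE3:12} comes from the same $L^\infty$ estimate applied to $v(t)-\psi$, picking up the factor $|t|$ from the Taylor expansion.

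For the $L^\infty$ piece I would start from Taylor's formula in $t$ applied to $v(t)=e^{-it\gamma\langle i\nabla\rangle}\psi$, expanded to order $2m$:
\begin{equation*}
v(t)=\sum_{j=0}^{2m}\frac{(-it)^j}{j!}\gamma^j\langle i\nabla\rangle^j\psi \;+\; \frac{(-i)^{2m+1}\gamma^{2m+1}}{(2m)!}\int_0^t(t-s)^{2m}\langle i\nabla\rangle^{2m+1}v(s)\,ds,
\end{equation*}
and then apply $\langle x\rangle^n D^\beta$ with $|\beta|\le 2m-2$. Even powers $\langle i\nabla\rangle^{2i}=(1-\Delta)^i$ are local differential operators, so the corresponding polynomial-in-$t$ terms reduce to weighted $L^\infty$ norms of $D^{\beta'}\psi$ for $|\beta'|\le 4m-2$; these are controlled from the definition of $\Spa$ directly for $|\beta'|\le 2m-2$ and via Sobolev embedding $H^k\hookrightarrow L^\infty$ for $k>N/2$ applied to the weighted $L^2$ bounds on derivatives of $\psi$ of order $\geq 2m-1$, using the margin $J=2m+2+k$. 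Odd powers are rewritten as $\langle i\nabla\rangle^{-1}(1-\Delta)^{i+1}$, and the Bessel factor $\langle x\rangle^n\langle i\nabla\rangle^{-1}$ is absorbed using Lemma \ref{BE}. The remainder term involves $\langle i\nabla\rangle^{-1}(1-\Delta)^{m+1}v(s)$; isolating the pure $v(s)$ contribution from the binomial expansion of $(1-\Delta)^{m+1}$ gives a Grönwall-type inequality which closes the $L^\infty$ estimate and yields the polynomial-in-$t$ growth.

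For the $L^2$ piece, $D^\beta$ commutes with $\gamma\langle i\nabla\rangle$, so $D^\beta v$ again solves the half-wave equation and is conserved in unweighted $L^2$. To handle the weight I would run an energy identity on $\langle x\rangle^n D^\beta v$, which produces the commutator $[\langle x\rangle^{2n},\gamma\langle i\nabla\rangle]$. Since $\gamma\langle i\nabla\rangle$ is a pseudo-differential operator of order $1$, this commutator is of order $0$ and, after differentiating the weight once, is controlled on $L^2$ by $\|\langle x\rangle^n D^\beta v\|_{L^2}$ plus weighted norms of strictly lower-order derivatives of $v$ that have already been estimated in the previous step. Grönwall then delivers the polynomial growth $(1+|t|)^{2m+n+1}$.

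The main obstacle is the Bessel-potential bound of Lemma \ref{BE}: controlling $\langle x\rangle^n\langle i\nabla\rangle^{-1}$ from weighted $L^\infty$ to weighted $L^\infty$ requires a careful analysis of the Bessel convolution kernel, balancing its mild singularity at the origin against its exponential decay at infinity so that the polynomial weight $\langle x\rangle^n$ can be commuted past. Once this is in hand, \eqref{eLE3:12} is obtained from the same Taylor expansion by dropping the $j=0$ term and factoring out $|t|$, and continuity of $t\mapsto e^{-it\gamma\langle i\nabla\rangle}\psi$ in $\Spa$ follows from \eqref{eLE3:11}--\eqref{eLE3:12} on compact time intervals together with the strong continuity of the group $(e^{-it\gamma\langle i\nabla\rangle})_{t\in\R}$ on $H^J(\R^N,\C^2)$.
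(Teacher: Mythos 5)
Your overall strategy is the same as the paper's — split the $\Spa$-norm into a weighted $L^\infty$ piece (orders $\le 2m-2$) and a weighted $L^2$ piece (orders $2m-1$ to $J$), treat the first by Taylor expansion, the second by weighted energy estimates, and bridge them with a Sobolev embedding. But the way you run the $L^\infty$ piece has a genuine gap. You Taylor-expand $v(t)$ to the \emph{fixed} order $2m$ and then apply $D^\beta$ with $|\beta|\le 2m-2$. The polynomial terms then involve $D^\beta\langle i\nabla\rangle^{j}\psi$ with $j$ up to $2m$, so derivatives of $\psi$ of total order up to $(2m-2)+2m=4m-2$. You claim these are handled via $H^k\hookrightarrow L^\infty$ from the weighted $L^2$ part of $\|\psi\|_\Spa$, but that requires weighted $L^2$ bounds on derivatives of $\psi$ of order up to $(4m-2)+k$, while the $\Spa$-norm stops at $J=2m+2+k$. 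Since $2m\ge k+n+3\ge k+5$ by~\eqref{fDInt1}, we have $4m-2>2m+2+k=J$, so the leading polynomial terms are not controlled at all. The integral remainder is even worse: with $|\beta|\le 2m-2$ and $(1-\Delta)^{m+1}$ it involves derivatives of $v(s)$ of order up to $4m$, which you cannot close by Grönwall against a norm that only sees weighted $L^\infty$ up to order $2m-2$.

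The paper's Lemma~\ref{eLE1} avoids this by \emph{matching} the Taylor order to the derivative order: for $|\beta|=2\ell$ one expands only to order $2(m-\ell)$, so the polynomial terms have total derivative order at most $2m$ (with $\langle i\nabla\rangle^{-1}$ absorbed by Lemma~\ref{BE} when $j$ is odd), and the remainder involves $D^\beta(1-\Delta)^{m-\ell}v(s)$, again of order $\le 2m$. Iterating downward from $\ell=m-1$ to $\ell=0$ for small $t$ reduces everything to the single quantity $\sup_{|\rho|=2m}\|\langle x\rangle^n D^\rho v\|_{L^\infty}$, and it is precisely Lemma~\ref{eSTE1} (which you would need but don't invoke) that controls this by $\|v\|_\Spb$, thereby connecting the two pieces; the two Lemmas do not simply ``sum.'' Finally, in the weighted $L^2$ piece your Grönwall iteration should run over the weight exponent $\ell=1,\dots,n$, not over ``lower-order derivatives'': the commutator $[\langle x\rangle^{2\ell},\langle i\nabla\rangle]$ keeps the derivative order $|\beta|$ fixed and loses one power of $\langle x\rangle$, which is exactly what the paper's commutation lemma~\eqref{commutator}--\eqref{abc} encodes. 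The outline is right; the Taylor-expansion bookkeeping is where it breaks.
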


Before proving Proposition~\ref{eLE3}, we first establish a weighted
$L^{\infty}$ estimate. Before doing this, we prepare two estimates. First, we
recall an interpolation estimate (see Lemma A.2 of \cite{CazNau}):

\begin{lemma}
Given $j\in \N $ and $\nu \in{ \R }$, there exists a constant $C$
such that
\begin{equation}
\sup_{|\beta|=j+1}\Vert\langle x\rangle^{\nu}D^{\beta}u\Vert_{L^{\infty}}\leq
C(\sup_{|\beta|=j}\Vert\langle x\rangle^{\nu}D^{\beta}u\Vert_{L^{\infty}}%
+\sup_{|\beta|=j+2}\Vert\langle x\rangle^{\nu}D^{\beta}u\Vert_{L^{\infty}})
\label{fSob6}%
\end{equation}
for all $u\in C^{j+2}( \R^N )$.
\end{lemma}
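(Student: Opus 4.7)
The plan is to reduce the multidimensional weighted inequality to a one-dimensional Landau–Kolmogorov type estimate applied along coordinate lines, exploiting the fact that the weight $\langle x\rangle^{\nu}$ is essentially constant on balls of radius one.

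Any multi-index $\beta'$ with $|\beta'|=j+1$ can be written as $\beta'=\beta+e_i$ for some multi-index $\beta$ with $|\beta|=j$ and some coordinate direction $e_i$. Fix such a pair $(\beta,e_i)$ and an arbitrary point $x_0\in\R^N$, and consider the one-variable function $f(s)=D^{\beta}u(x_0+s e_i)$ on $[-1,1]$. The Taylor identities
$$
f(\pm 1)=f(0)\pm f'(0)+\int_0^{\pm 1}(\pm 1-s)f''(s)\,ds
$$
yield, after subtraction and solving for $f'(0)$, the elementary bound
$$
|f'(0)|\le \|f\|_{L^\infty([-1,1])}+\tfrac{1}{2}\|f''\|_{L^\infty([-1,1])}.
$$
Since $f'(0)=D^{\beta'}u(x_0)$ and $f''(s)=\partial_i^2 D^{\beta}u(x_0+s e_i)$ is one of the derivatives of $u$ of order $j+2$, this is the unweighted version of the claim.

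To install the weight, I would use that for every $x$ with $|x-x_0|\le 1$ one has $\tfrac12\langle x_0\rangle\le\langle x\rangle\le 2\langle x_0\rangle$, so that $\langle x_0\rangle^{\nu}\le 2^{|\nu|}\langle x\rangle^{\nu}$. Multiplying the one-dimensional inequality by $\langle x_0\rangle^{\nu}$ and inserting this comparison on the right-hand side gives
$$
\langle x_0\rangle^{\nu}|D^{\beta'}u(x_0)|\le 2^{|\nu|}\!\!\sup_{|x-x_0|\le 1}\!\langle x\rangle^{\nu}|D^{\beta}u(x)|+\tfrac{2^{|\nu|}}{2}\!\!\sup_{|x-x_0|\le 1}\!\langle x\rangle^{\nu}|\partial_i^2 D^{\beta}u(x)|,
$$
which is bounded above by $2^{|\nu|}\bigl(\sup_{|\beta|=j}\|\langle x\rangle^{\nu}D^{\beta}u\|_{L^\infty}+\tfrac12\sup_{|\gamma|=j+2}\|\langle x\rangle^{\nu}D^{\gamma}u\|_{L^\infty}\bigr)$. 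Taking the supremum over $x_0\in\R^N$ and over all admissible pairs $(\beta,e_i)$ yields \eqref{fSob6}.

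No serious obstacle is expected; the only point requiring a line of justification is the two-sided comparison of $\langle x\rangle$ with $\langle x_0\rangle$ on the unit ball, which is immediate from $|x|\le|x_0|+1$ and $|x_0|\le|x|+1$ combined with $\langle y\rangle\ge 1$. The argument is uniform in $x_0$ and depends on $\nu$ only through $2^{|\nu|}$, which supplies the constant $C$ of the statement.
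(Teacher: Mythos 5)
Your proof is correct. A minor note on the presentation: the paper does not actually supply its own proof of this lemma; it simply cites it as Lemma~A.2 of~\cite{CazNau}. So there is no internal argument to compare against, but your self-contained argument is sound. The one-dimensional Landau--Kolmogorov step is carried out cleanly: from $f(\pm1)=f(0)\pm f'(0)+\int_0^{\pm1}(\pm1-s)f''(s)\,ds$, subtraction gives $|f'(0)|\le\|f\|_{L^\infty([-1,1])}+\tfrac12\|f''\|_{L^\infty([-1,1])}$, and applying this to $f(s)=D^{\beta}u(x_0+se_i)$ for $\beta'=\beta+e_i$, $|\beta|=j$, produces the unweighted inequality with derivatives $D^\beta u$ and $\partial_i^2 D^\beta u$ of orders $j$ and $j+2$. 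The weight insertion is also correct: for $|x-x_0|\le1$ one checks $3|x_0|^2-2|x_0|+2>0$ (and symmetrically in $x$), giving $\tfrac12\langle x_0\rangle\le\langle x\rangle\le 2\langle x_0\rangle$, whence $\langle x_0\rangle^\nu\le 2^{|\nu|}\langle x\rangle^\nu$ for either sign of $\nu$, and the final constant $C=2^{|\nu|}$ is uniform in $x_0$, $\beta'$, and $u$. This is exactly the standard argument one would expect for such a lemma, and there is no gap.
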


Also, we need the following control of the Bessel potential $\left\langle
i\nabla\right\rangle ^{-1}.$

\begin{lemma}
\label{BE}Let $n\in \N $ and $1\leq p\leq\infty.$ For $f\in L^{p}$ the
estimate
\begin{equation}
\Vert\langle x\rangle^{n}\left\langle i\nabla\right\rangle ^{-1}f\Vert_{L^{p}%
}\leq C\Vert\langle x\rangle^{n}f\Vert_{L^{p}} \label{Besselestimate}%
\end{equation}
is true for some $C>0.$
\end{lemma}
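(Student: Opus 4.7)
The plan is to realize $\langle i\nabla\rangle^{-1}$ as convolution with the Bessel kernel $G := \Fmu(\langle \xi\rangle^{-1})$ and then to exploit the rapid decay of $G$ at infinity. The three classical facts about $G$ that I would invoke are: (a) $G \ge 0$ on $\R^N$; (b) $G \in L^{1}(\R^N)$ with $\|G\|_{L^1}=1$ (equivalently, $\widehat{G}(0)=1$); (c) $G$ decays exponentially at infinity, namely $G(x)\le Ce^{-c|x|}$ for $|x|\ge 1$ and some $c,C>0$, with at worst a locally integrable singularity at the origin when $N\ge 2$. These properties are standard (see e.g.\ Stein's \emph{Singular Integrals}, Ch.~V, or the original work of Aronszajn--Smith) and can be read off from the subordination formula
\[
G(x) = C_N \int_0^\infty t^{-(N+1)/2}\, e^{-t - |x|^2/(4t)}\, dt.
\]
In particular, $\langle \cdot\rangle^n G \in L^{1}(\R^N)$ for every $n\in\N$.

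The second ingredient is the Peetre-type inequality $\langle x\rangle \le \sqrt{2}\,\langle x-y\rangle \langle y\rangle$, which follows at once from $1+|x|^2 \le 2(1+|x-y|^2)(1+|y|^2)$. Writing $\langle i\nabla\rangle^{-1}f = G*f$, I would bound pointwise
\[
\langle x\rangle^n \bigl|(G*f)(x)\bigr| \;\le\; 2^{n/2}\!\int_{\R^N}\!\bigl(\langle x-y\rangle^n G(x-y)\bigr)\bigl(\langle y\rangle^n |f(y)|\bigr)\, dy \;=\; 2^{n/2}\bigl[(\langle \cdot\rangle^n G)\ast(\langle \cdot\rangle^n |f|)\bigr](x).
\]
Young's convolution inequality then gives
\[
\bigl\|\langle x\rangle^n \langle i\nabla\rangle^{-1} f\bigr\|_{L^p} \;\le\; 2^{n/2}\,\|\langle \cdot\rangle^n G\|_{L^1}\,\|\langle \cdot\rangle^n f\|_{L^p},
\]
which is exactly~\eqref{Besselestimate}. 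Because $G$ is a fixed function independent of $p$, the constant is uniform in $1\le p\le\infty$.

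The only substantive input is the decay estimate~(c): the polynomial weight $\langle\cdot\rangle^n$ must be absorbed by the exponential decay of $G$. This is the one place where I would either quote a classical reference or, if self-containedness is preferred, derive the bound from the subordination formula above by a short Laplace-type analysis (the saddle at $t=|x|/2$ produces the Macdonald-type asymptotics $G(x)\sim c_N |x|^{-(N-1)/2}e^{-|x|}$ as $|x|\to\infty$, and integrability near $0$ follows by a change of variables $t=|x|^2 s$). All remaining steps---the pointwise splitting of $\langle x\rangle^n$ and the application of Young's inequality---are routine.
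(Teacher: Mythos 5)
Your proof is correct and follows essentially the same route as the paper: realize $\langle i\nabla\rangle^{-1}$ as convolution with the Bessel kernel $G$, exploit its exponential decay to conclude $\langle\cdot\rangle^n G\in L^1$, and close with Young's inequality. The only difference is in how the weight is split. The paper first bounds $\langle x\rangle^n\le C(1+|x|^n)$, estimates the unweighted piece separately via the classical bound $\|G*f\|_{L^p}\le\|f\|_{L^p}$, and handles the weighted piece by expanding $|x|^n\le(|x-y|+|y|)^n$ with the binomial theorem before applying Young. You instead invoke the Peetre inequality $\langle x\rangle\le\sqrt2\,\langle x-y\rangle\langle y\rangle$ (valid since $G\ge0$), which absorbs the entire weight in one stroke and renders the separate unweighted case and the binomial sum unnecessary. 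Both arguments rest on exactly the same decay input for $G$; yours is marginally tidier, while the paper's is perhaps more self-contained in that it only quotes the standard $\|G\|_{L^1}=1$ bound from Stein rather than a Peetre-type lemma.
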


\begin{proof}
We use the theory of Bessel potentials applied to $\left\langle i\nabla
\right\rangle ^{-1}.$ We have (see relation (26), Chapter V of \cite{Stein})%
\[
\left\langle i\nabla\right\rangle ^{-1}f=G\ast f=\int_{ \R ^{N}}G\left(
x-y\right)  f\left(  y\right)  dy,
\]
where%
\begin{equation}
G\left(  x\right)  =\frac{1}{2\pi}\int_{0}^{\infty}\frac{e^{-\frac
{\pi\left\vert x\right\vert ^{2}}{\theta}-\frac{\theta}{4\pi}}}{\theta
^{1+\frac{N-1}{2}}}d\theta. \label{G}%
\end{equation}
Since $\langle x\rangle^{n}\leq C\left(  1+\left\vert x\right\vert
^{n}\right)  ,$ we estimate%
\begin{equation}
\left\vert \langle x\rangle^{n}\left(  \left\langle i\nabla\right\rangle
^{-1}f\right)  \left(  x\right)  \right\vert \leq C\left(  \left\vert \left(
G\ast f\right)  \left(  x\right)  \right\vert +\left\vert \left\vert
x\right\vert ^{n}\left(  G\ast f\right)  \left(  x\right)  \right\vert
\right)  , \label{16}%
\end{equation}
for $x\in \R ^{N}.$ We take into account the estimate for the Bessel
potentials (see relation (36), Chapter V of \cite{Stein})
\begin{equation}
\left\Vert G\ast f\right\Vert _{L^{p}}\leq\left\Vert f\right\Vert _{L^{p}%
},\text{ for }1\leq p\leq\infty. \label{17}%
\end{equation}
Using this estimate, we control the first term in the right-hand side of
\eqref{16} by $\left\Vert f\right\Vert _{L^{p}}.$ Now we use that%
\[
\left\vert x\right\vert ^{n}\leq\left(  \left\vert x-y\right\vert +\left\vert
y\right\vert \right)  ^{n}=\sum_{j=0}^{n}C_{j}^{{n}}\left\vert x-y\right\vert
^{n-j}\left\vert y\right\vert ^{j},
\]
where $C_{j}^{{n}}$ are the binomial coefficients. Then%
\[
\left\vert \left\vert x\right\vert ^{n}\left(  G\ast f\right)  \left(
x\right)  \right\vert \leq C\sum_{j=0}^{n}\int_{ \R ^{N}}\left\vert
x-y\right\vert ^{n-j}G\left(  x-y\right)  \left\vert y\right\vert
^{j}\left\vert f\left(  y\right)  \right\vert dy.
\]
Since $G$ given by \eqref{G} has the singularity $|x|^{-n+1/2}$ at $x=0$ and
decays exponentially as $|x|\rightarrow\infty$ (see \cite[Chapter~V, formulas (29)-(30)]{Stein}), we see that $\left\vert \cdot\right\vert
^{m}G\in L^{1}$, for all $m\geq0$. Hence, by Young inequality we show that%
\begin{equation}
\left\Vert \left\vert \cdot\right\vert ^{n}\left(  G\ast f\right)  \left(
\cdot\right)  \right\Vert _{L^{p}}\leq C\Vert\langle\cdot\rangle^{n}%
f\Vert_{L^{p}},\text{ for }1\leq p\leq\infty. \label{18}%
\end{equation}
Using \eqref{17} and \eqref{18} in \eqref{16} we obtain~\eqref{Besselestimate}.
\end{proof}

We now are in position to prove the weighted $L^{\infty}$ estimate for the
linear flow. We have the following result.

\begin{lemma} \label{eLE1} 
Assume~\eqref{fDInt1}-\eqref{fSpa1b1} with $\alpha=1$. There
exist $C>0$  and $t_{0}>0$  such that
\begin{equation} \label{Low5b1}
\begin{split}
\sum_{{j}=0}^{2 m - 2 }  \sup_{|\beta|={j}} & \Vert\langle x\rangle^{{n}}D^{\beta
}e^{ \mns is\gamma\left\langle i\nabla\right\rangle }\psi\Vert_{L^{\infty
}((0,t)\times \R^N )} \\
\le & C(1+t)^{2 m }\sum_{  |j| \le 2m} \Vert\langle
x\rangle^{{n}}D^{\beta}\psi\Vert_{L^{\infty} } \\ &
+Ct(1+t)^{2m }\sup_{|\beta|=2{m}}\Vert\langle x\rangle^{{n}}D^{\beta
}e^{ \mns is\gamma\left\langle i\nabla\right\rangle }\psi\Vert_{L^{\infty
}((0,t)\times \R^N )},
\end{split}
\end{equation}
for all $0\leq t\leq t_{0}$ and all $\psi\in H^{{J}}( \R^N )$.
\end{lemma}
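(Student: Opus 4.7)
I would follow a three-step strategy: (1) use a Taylor expansion to order two combined with Lemma~\ref{BE} to derive a cross-order recursion for weighted $L^{\infty}$ norms of derivatives; (2) iterate this recursion to transfer responsibility from low-order derivatives of $v$ to the top order $2m$; (3) close the odd orders via the interpolation inequality~\eqref{fSob6}.

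Step (1): Since $\gamma^{2}=I$, one has $(\gamma\langle i\nabla\rangle)^{2}=I-\Delta$, and a Taylor expansion in time of $v(t)=e^{-it\gamma\langle i\nabla\rangle}\psi$ to order two yields
\[
v(t)=\psi-it\gamma\langle i\nabla\rangle\psi-\int_{0}^{t}(t-s)(I-\Delta)v(s)\,ds.
\]
Applying $D^{\beta}$ with $|\beta|=j$, the first-order Taylor term is rewritten using the identity $\langle i\nabla\rangle=\langle i\nabla\rangle^{-1}(I-\Delta)$, so that Lemma~\ref{BE} yields
\[
\|\langle x\rangle^{n}D^{\beta}\gamma\langle i\nabla\rangle\psi\|_{L^{\infty}}\le C\|\langle x\rangle^{n}(I-\Delta)D^{\beta}\psi\|_{L^{\infty}},
\]
which decomposes into weighted $L^{\infty}$ norms of derivatives of $\psi$ of orders $j$ and $j+2$; the Duhamel integrand $(I-\Delta)D^{\beta}v$ similarly splits into derivatives of $v$ of orders $j$ and $j+2$. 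Writing $F_{j}(t)=\sup_{0\le s\le t}\sup_{|\beta|=j}\|\langle x\rangle^{n}D^{\beta}v(s)\|_{L^{\infty}}$ and $M=\sum_{|\gamma|\le 2m}\|\langle x\rangle^{n}D^{\gamma}\psi\|_{L^{\infty}}$, this gives, for every $j\le 2m-2$,
\[
F_{j}(t)\le (1+Ct)M+Ct^{2}\bigl(F_{j}(t)+F_{j+2}(t)\bigr).
\]
Choosing $t_{0}$ small so that $Ct_{0}^{2}\le 1/2$ and absorbing the $F_{j}(t)$ term into the left-hand side leaves $F_{j}(t)\le C(1+t)M+Ct^{2}F_{j+2}(t)$.

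Step (2): I iterate this recursion downward in steps of two. For even $j$, after $(2m-j)/2$ iterations the right-hand side involves only $M$ and $F_{2m}(t)$, giving
\[
F_{j}(t)\le C(1+t)^{2m}M+Ct^{2m-j}F_{2m}(t)\le C(1+t)^{2m}M+CtF_{2m}(t)
\]
for $t\le t_{0}$, since $t^{2m-j-1}\le t_{0}^{2m-j-1}$ is bounded. For odd $j$, the same iteration terminates at $F_{2m-1}(t)$, which I then replace by $C(F_{2m-2}(t)+F_{2m}(t))$ using the interpolation inequality~\eqref{fSob6} and substitute the estimate for $F_{2m-2}(t)$ just obtained. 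Summing over $j=0,\ldots,2m-2$ yields~\eqref{Low5b1}.

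The main obstacle is the pseudo-differential nature of $\langle i\nabla\rangle$ in the first Taylor coefficient. The factorization $\langle i\nabla\rangle=\langle i\nabla\rangle^{-1}(I-\Delta)$ combined with Lemma~\ref{BE} is the decisive trick: it converts the pseudo-differential expression into weighted derivatives of $\psi$ at the cost of two extra orders of differentiation. Equally important, the fact that $(\gamma\langle i\nabla\rangle)^{2}=I-\Delta$ is a pure differential operator is what allows the Taylor remainder to split cleanly into $F_{j}$ and $F_{j+2}$, keeping the derivative count on the right-hand side at most $2m$ throughout the iteration.
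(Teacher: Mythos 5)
Your proof is correct and uses exactly the same toolkit as the paper: the identity $(\gamma\langle i\nabla\rangle)^2 = I-\Delta$, Taylor's formula with integral remainder, Lemma~\ref{BE} to pass through the Bessel potential $\langle i\nabla\rangle^{-1}$, a small-time absorption, and the interpolation estimate~\eqref{fSob6} to recover the odd orders. The genuine difference is in the order to which Taylor's formula is expanded. You expand only to second order, which produces a uniform two-term cross-order recursion $F_j \le (1+Ct)M + Ct^2(F_j+F_{j+2})$, and after absorbing $F_j$ you iterate in steps of two to climb from any $j\le 2m-2$ to the top order $2m$, the geometric sum being under control because $Ct_0^2\le \tfrac12$. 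The paper instead performs, for each even target order $2\ell$, a single Taylor expansion of order $2(m-\ell)$: the binomial remainder $(I-\Delta)^{m-\ell}$ then directly produces derivatives of $v$ of all orders up to $2m$, so each order is absorbed once and the iteration runs downward in $\ell$ with explicit constants $2^{m-\ell}$. The two schemes are equivalent in substance --- the paper's high-order expansion is the closed form of your iterated second-order one --- but yours has the merit of a single, fixed recursion, while the paper's makes the $\ell$-descent and its constants explicit in a single display. One small remark on the odd orders: once the even orders are in hand, the paper interpolates each odd order between its two even neighbours, both of which are already bounded; your route of iterating the recursion up to $F_{2m-1}$ and then interpolating only that top odd order is a bit more roundabout, though still correct.
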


\begin{proof}
Set $v(t)=e^{ \mns it\gamma\left\langle i\nabla\right\rangle }\psi$. Since
$\left\langle i\nabla\right\rangle ^{{j}}\psi\in H^{{J}-{j}}({ \R }
^{N})$ for $0\leq{j}\leq2{m}$, we have $v\in C^{{j}}([0,\infty),H^{{J}-{j}
}( \R^N ))$ and $\frac{d^{{j}}v}{dt^{{j}}}= ( i \gamma) ^{{j}%
}\left\langle i\nabla\right\rangle ^{j}v(t)$ for all $0\leq{j}\leq2{m}$. Given
$0\leq{\ell}\leq{m -1 }$, we apply Taylor's formula with integral remainder
involving the derivative of order $2\left(  {m}-{\ell}\right)  $ to the
function $v$, and we obtain
\[%
\begin{split}
v(t)=  &  \sum_{{j}=0}^{2{m}-2{\ell}-1}\frac{(it \gamma)^{{j}}}{{j}%
!}\left\langle i\nabla\right\rangle ^{{j}}\psi\\
&  +\frac{\left(  i \gamma\right)  ^{2{m}-2{\ell}} }{(2{m}-2{\ell-1})!}%
\int_{0}^{t}(t-s)^{2{m}-2{\ell}-1}\left(  1- \Delta \right)  ^{{m}-{\ell}%
}v(s)\,ds
\end{split}
\]
for all $t\geq0$. Applying now $D^{\beta}$ with $|\beta|= 2{\ell} $, we deduce
that
\[%
\begin{split}
D^{\beta}v(t)=  &  \sum_{{j}=0}^{2{m}-2{\ell}-1}\frac{(it \gamma)^{{j}}}{{j}!}
D^{\beta}\left\langle i\nabla\right\rangle ^{{j}}\psi\\
&  +\frac{\left(  i \gamma\right)  ^{2{m}-2{\ell}} }{(2{m}-2{\ell-1})!}%
\int_{0}^{t}(t-s)^{2{m}-2{\ell}-1} D^{\beta}\left(  1- \Delta \right)
^{{m}-{\ell}}v(s)\,ds .
\end{split}
\]
Developing the binomial $\left(  1- \Delta \right)  ^{{m}-{\ell}}$, we
obtain
\begin{equation}
\label{eAbs1:2}%
\begin{split}
D^{\beta}v(t)=  &  \sum_{{j}=0}^{2{m}-2{\ell}-1} \frac{(it \gamma)^{{j}}}%
{{j}!} D^{\beta}\langle i \nabla\rangle^{{j}} \psi\\
&  +\frac{\left(  i \gamma\right)  ^{2{m}-2{\ell}} }{(2{m}-2{\ell-1})!} \sum_{
j=0 } ^{m - \ell} (-1)^{j} C_{j}^{m-\ell} \int_{0}^{t}(t-s)^{2{m}-2{\ell}-1}
D^{\beta}\Delta^{j} v(s)\,ds .
\end{split}
\end{equation}
Identity~\eqref{eAbs1:2} holds in $C([0,\infty),H^{{k}}( \R^N ))$,
hence in $C([0,\infty)\times \R^N )$ by Sobolev's embedding. We
multiply~\eqref{eAbs1:2} by $\langle x\rangle^{n} $ and take the supremum in
$x $, then in $t$, to obtain
\begin{equation} 
\label{kg1}
\begin{split}
\Vert\langle x\rangle^{{n}}D^{\beta} v\Vert_{L^{\infty}((0,t)\times
 \R^N )} \le &  C(1+t)^{2{m}} \sum_{{j}=0}^{{2m - 2\ell-1 }}
\Vert\langle x\rangle^{{n}} \langle i\nabla\rangle^{ j}D^{\beta}\psi
\Vert_{L^{\infty} }\\
&  +Ct(1+t)^{2{m}}\sum_{j={0}} ^{{m - \ell}} \Vert\langle x\rangle^{{n}%
}D^{\beta} \Delta^{j} v \Vert_{L^{\infty}( (0,t)\times \R^N )}.
\end{split}
\end{equation} 
We note that if $j$ is even, then $\langle i \nabla\rangle^{{\ j }} = (1-
\Delta)^{\frac{j} {2}}$ and that if $j$ is odd, then $\langle i \nabla
\rangle^{j} = \langle i \nabla\rangle^{-1} (1- \Delta)^{\frac{j+ 1} {2}}$.
Therefore, for $0\le j\le2m- 2\ell-1$, and since $|\beta|= 2{\ell} $, we have
(using~\eqref{Besselestimate} if $j$ is odd)
\begin{equation*} 
\Vert\langle x\rangle^{{n}} \langle i\nabla\rangle^{ j}D^{\beta}\psi
\Vert_{L^{\infty}  } \le\sum_{ |\rho|\le2m } \Vert\langle
x\rangle^{{n}}D^{\rho} \psi\Vert_{L^{\infty} } .
\end{equation*} 
It follows that
\begin{equation} \label{kg2}
\begin{split}
\sup_{ |\rho| =2 \ell} \Vert\langle x\rangle^{{n}}D^{\rho} v\Vert_{L^{\infty
}((0,t)  \times \R^N )} \le  & C(1+t)^{2{m}}\sum_{ |\rho|\le2m }
\Vert\langle x\rangle^{{n}}D^{\rho} \psi\Vert_{L^{\infty} }\\
  +Ct(1+t)^{2{m}} & \sum_{j={\ell}}^{{m}} \sup_{| \rho|=2j}\Vert\langle
x\rangle^{{n}}D^{ \rho} v \Vert_{L^{\infty}((0,t)\times \R^N )}.
\end{split}
\end{equation} 

We now fix $t_{0}>0$ sufficiently small so that
\begin{equation}
\label{fCndTz}Ct_{0} (1+t_{0} )^{2{m}}\le\frac{1} {2} ,
\end{equation}
and we set
\begin{equation}
\label{fDefnA}A= C(1+t_{0})^{2{m}}\sum_{ |\rho|\le2m } \Vert\langle
x\rangle^{{n}}D^{\rho} \psi\Vert_{L^{\infty} } .
\end{equation}
Moreover, for $0<t\le t_{0}$, we set
\begin{equation}
\label{fDefnB}B(t)= C t (1+t)^{2m} \sup_{|\rho|=2m}\Vert\langle x\rangle^{{n}%
}D^{\rho} v \Vert_{L^{\infty}((0,t)\times \R^N )} .
\end{equation}
With this notation, we see that for $0<t\le t_{0}$,
\[%
\begin{split}
\sup_{|\rho|=2{\ell}}\Vert\langle x\rangle^{{n}}D^{\rho}  &  v\Vert
_{L^{\infty}((0,t)\times \R^N )} \le A\\
&  + C t (1+t)^{2{m}} \sum_{j={\ell+1 }}^{{m}}\sup_{|\rho|=2j}\Vert\langle
x\rangle^{{n}}D^{\rho} v \Vert_{L^{\infty}((0,t)\times \R^N )}\\
&  + \frac{1} {2} \sup_{|\rho|=2\ell}\Vert\langle x\rangle^{{n}}D^{\rho} v
\Vert_{L^{\infty}((0,t)\times \R^N )},
\end{split}
\]
so that
\begin{equation}
\label{fEstGen1}%
\begin{split}
\sup_{|\rho|=2{\ell}}\Vert\langle x\rangle^{{n}}D^{\rho}  &  v\Vert
_{L^{\infty}((0,t)\times \R^N )} \le2 A\\
&  + 2 C t (1+t)^{2{m}} \sum_{j={\ell+1 }}^{{m}}\sup_{|\rho|=2j}\Vert\langle
x\rangle^{{n}}D^{\rho} v \Vert_{L^{\infty}((0,t)\times \R^N )} .
\end{split}
\end{equation}
We first apply~\eqref{fEstGen1} with $\ell=m-1$, and we obtain
using~\eqref{fDefnB}
\begin{equation}
\label{fEstGen2}\sup_{|\rho|=2( m-1) }\Vert\langle x\rangle^{{n}}D^{\rho}
v\Vert_{L^{\infty}((0,t)\times \R^N )} \le2 A + 2 B(t) .
\end{equation}
Next, we apply~\eqref{fEstGen1} with $\ell=m- 2$, together
with~\eqref{fEstGen2} and~\eqref{fCndTz}, to obtain
\[
\sup_{|\rho|=2 (m-2) }\Vert\langle x\rangle^{{n}}D^{\rho} v\Vert_{L^{\infty
}((0,t)\times \R^N )} \le4 A + 4 B(t) .
\]
An obvious iteration shows that
\[
\sup_{|\rho|=2 (m- j ) }\Vert\langle x\rangle^{{n}}D^{\rho} v\Vert_{L^{\infty
}((0,t)\times \R^N )} \le2^{j } A + 2^{j} B(t)
\]
for all $1\le j \le m$. Thus we see that
\begin{equation}
\label{fEstGen5}\sup_{ 0 \le\ell\le m-1 }\sup_{|\rho|=2 \ell}\Vert\langle
x\rangle^{{n}}D^{\rho} v\Vert_{L^{\infty}((0,t)\times \R^N )}
\le2^{m} A + 2^{m} B(t) .
\end{equation}
The derivatives of even order in the left-hand side of \eqref{Low5b1} are
estimated by~\eqref{fEstGen5}. Finally, we use the interpolation estimate
\eqref{fSob6} to control the derivatives of odd order in the left-hand side of
\eqref{Low5b1} by the derivatives of even order. This completes the proof of~\eqref{Low5b1}.
\end{proof}

Estimate \eqref{Low5b1} shows that we can control the linear solution in terms
of the initial data and a high-order derivative of this solution. Since we can
estimate the $L^{2}$ norm from the equation via energy estimates, we now
control the uniform norm of the term in the right-hand side of \eqref{Low5b1}
which involves the linear flow by Sobolev's embedding. The last is done by
establishing an appropriate weighted $L^{2}$ estimate (Lemma~\ref{eLE3:0}
below). We first introduce some notation.
Assume~\eqref{fDInt1}-\eqref{fSpa1b1} with $\alpha=1$. We define the space%

\begin{equation} \label{fDfnSpbD1} 
 \Spb =\{u\in H^{J}( \R^N ,  \C ^{2});\,\langle
 x\rangle^{{n}}D^{\beta}u\in L^{2}( \R^N ,  \C ^{2})\text{ for
}2{m}-1 \leq\left\vert \beta\right\vert \leq J\}.
\end{equation} 
We equip $ \Spb $ with the norm
\begin{equation} \label{fDfnSpbD2} 
\Vert u\Vert_{ \Spb }=\sum_{{  |\beta | }=0}^{2{m-2}} \Vert
D^{\beta}u\Vert_{L^{2}}+ \sum_{|\beta|=2{m} -1 }^{J}\Vert\langle x\rangle^{{n}%
}D^{\beta}u\Vert_{L^{2}}.
\end{equation} 
Note that, using~\eqref{fSpa1:b2}, 
\begin{equation} \label{fInjYX} 
 \Spa \hookrightarrow \Spb . 
\end{equation} 
We
observe that $( \Spb ,\Vert\cdot\Vert_{ \Spb })$ is a Banach
space and that $\Sseul ( \R^N , \C^{2})$ is dense in
$ \Spb $. We will use the following commutation relation.

\begin{lemma}
Given any integer $\ell \ge 1$, there exist an integer $ \nu \ge 1$ and functions 
\begin{equation*} 
(a_j) _{ 1\le j\le  \nu   }, (b_j) _{ 1\le j\le  \nu   }, (c_j) _{ 1\le j\le  \nu   } \subset C^\infty (\R^N )
\end{equation*} 
satisfying
\begin{equation} \label{abc}
 | a_j (x) |\le C \langle x\rangle ^{ \ell -1}, \quad  | b_j (\xi ) |\le C, \quad 
 | c_j (x) |\le C \langle x\rangle ^{ \ell }, 
\end{equation}
for $1\le j\le  \nu   $, 
such that
\begin{equation} \label{commutator}
 \langle \cdot \rangle ^{2 \ell } \langle i\nabla \rangle f = \langle i\nabla \rangle (  \langle \cdot    \rangle^{2 \ell } f  ) + \sum_{ j=1 }^ \nu  a_j \Fmu [ b_j \Fou ( c_j f ) ],
\end{equation} 
for all $x\in \R ^{N}$ and $f\in \Srn$.
\end{lemma}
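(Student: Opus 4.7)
The plan is to pass to the Fourier side. Using $\Fou(x_j g) = i\partial_{\xi_j}\hat g$ one has $\Fou(\langle x\rangle^{2\ell} g)(\xi)=(1-\Delta_\xi)^\ell \hat g(\xi)$ for every $g\in\Srn$, whereas $\Fou(\langle i\nabla\rangle g)(\xi)=\langle\xi\rangle\hat g(\xi)$. Therefore, Fourier transforming the difference between the left-hand side of~\eqref{commutator} and the first term on the right-hand side yields exactly the commutator $[(1-\Delta_\xi)^\ell,\langle\xi\rangle]\hat f$, and the task reduces to writing the inverse Fourier transform of this commutator in the required sandwich form.

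Expand $(1-\Delta_\xi)^\ell=\sum d_\gamma\partial_\xi^\gamma$ as a finite sum with $|\gamma|\leq 2\ell$, and apply the Leibniz rule to $\partial_\xi^\gamma(\langle\xi\rangle\hat f)$. The terms in which no derivative falls on $\langle\xi\rangle$ reassemble into $\langle\xi\rangle(1-\Delta_\xi)^\ell\hat f$ and cancel, leaving
\[
[(1-\Delta_\xi)^\ell,\langle\xi\rangle]\hat f
 = \sum c_{\delta,\sigma}\,(\partial_\xi^\delta\langle\xi\rangle)(\partial_\xi^\sigma\hat f),\qquad |\delta|\geq 1,\quad |\delta|+|\sigma|\leq 2\ell.
\]
In particular $|\sigma|\leq 2\ell-1$. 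An elementary induction from the chain rule gives $\partial_\xi^\delta\langle\xi\rangle=O(\langle\xi\rangle^{1-|\delta|})$, so for every $|\delta|\geq 1$ the factor $\tilde b(\xi):=\partial_\xi^\delta\langle\xi\rangle$ is smooth and bounded on $\R^N$.

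It remains to convert each term $\Fmu[(\partial_\xi^\delta\langle\xi\rangle)(\partial_\xi^\sigma\hat f)](x)$ into the form $a_j\Fmu[b_j\Fou(c_j f)]$ with the claimed weights. Using $\partial_\xi^\sigma\hat f=\Fou((-iy)^\sigma f)$, split $\sigma=\sigma_1+\sigma_2$ with $|\sigma_1|\leq\ell-1$ and $|\sigma_2|\leq\ell$; this is possible precisely because $|\sigma|\leq 2\ell-1$. In the oscillatory-integral representation
\[
\Fmu\!\left[\tilde b\,\Fou((-iy)^{\sigma_1}(-iy)^{\sigma_2}f)\right]\!(x)
=C\!\int\!\!\int e^{i(x-y)\cdot\xi}(-iy)^{\sigma_1}\tilde b(\xi)(-iy)^{\sigma_2}f(y)\,dy\,d\xi,
\]
I use the identity $y^{\sigma_1}e^{i(x-y)\cdot\xi}=(x+i\partial_\xi)^{\sigma_1}e^{i(x-y)\cdot\xi}$, which is valid because $x$ and $\partial_\xi$ commute, expand via the multinomial theorem, and integrate by parts in $\xi$ to transfer all the $\xi$-derivatives onto $\tilde b$. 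This produces a finite sum of terms of the form $C\,x^\alpha\Fmu[\partial_\xi^\beta\tilde b\,\Fou((-iy)^{\sigma_2}f)](x)$ with $\alpha+\beta=\sigma_1$, which matches the required decomposition with $a_j(x)=Cx^\alpha$ (so $|a_j|\leq C\langle x\rangle^{\ell-1}$), $c_j(y)=C\,y^{\sigma_2}$ (so $|c_j|\leq C\langle y\rangle^{\ell}$), and $b_j(\xi)=\partial_\xi^{\beta+\delta}\langle\xi\rangle$ (bounded since $|\beta+\delta|\geq|\delta|\geq 1$). The main technical step is the integration-by-parts identity that moves $y^{\sigma_1}$ to $x^{\sigma_1}$ plus lower-order corrections on $\tilde b$; once this is in place, the bounds~\eqref{abc} are immediate.
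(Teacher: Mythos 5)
Your proof is correct and follows essentially the same route as the paper: pass to the Fourier side, identify the error term as the commutator $[(1-\Delta_\xi)^\ell,\langle\xi\rangle]$ applied to $\hat f$, Leibniz-expand it into terms $(\partial_\xi^\delta\langle\xi\rangle)(\partial_\xi^\sigma\hat f)$ with $|\delta|\ge 1$ and $|\sigma|\le 2\ell-1$, split $\sigma=\sigma_1+\sigma_2$ with $|\sigma_1|\le\ell-1$, $|\sigma_2|\le\ell$, and integrate by parts in $\xi$ to convert the extra $\xi$-derivatives into a polynomial weight in $x$ of order $\le\ell-1$. Your oscillatory-integral identity $y^{\sigma_1}e^{i(x-y)\cdot\xi}=(x+i\partial_\xi)^{\sigma_1}e^{i(x-y)\cdot\xi}$ is a clean, equivalent packaging of the integration by parts that the paper performs directly on the single $\Fmu$ integral (moving $D^{\beta_2''}$ onto $e^{ix\cdot\xi}D^{\beta_1}\langle\xi\rangle$), and both yield exactly the terms $a_j\Fmu[b_j\Fou(c_j f)]$ with the stated bounds.
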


\begin{proof}
Since $\left\langle x\right\rangle ^{2 \ell }=\sum_{j=0}^ \ell C_{j}^ \ell  \left\vert
x\right\vert ^{2j},$ we have
\begin{equation*} 
\begin{split} 
\Fou [ \langle \cdot \rangle ^{2 \ell } \langle i\nabla \rangle f ] & = \sum_{ j=0 }^ \ell   C^ \ell _j \Fou [  | \cdot | ^{2j} \langle i\nabla \rangle f ] = \sum_{ j=0 }^ \ell   C^ \ell _j (- \Delta  )^j \Fou [  \langle i\nabla \rangle f ] \\& = \sum_{ j=0 }^ \ell   C^ \ell _j (- \Delta  )^j [  \langle \xi   \rangle  \widehat {f} (\xi )  ] .
\end{split} 
\end{equation*} 
We observe that 
\begin{equation*} 
(-\Delta )^j ( u v )=  u  [ (-\Delta )^j v] + \sum_{\substack{  |\beta _1| +  |\beta _2| =2j\\  |\beta _1|\ge 1}} \gamma  _{ \beta _1, \beta _2 } [ D^{ \beta _1} u ]  [ D^{ \beta _2} v ]  
\end{equation*} 
for some coefficients $\gamma _{ j_1, j_2 }$.
Therefore, we may write
\begin{equation}\label{24}
\Fou [ \langle \cdot \rangle ^{2 \ell } \langle i\nabla \rangle f ] =I_{1}(  \xi )  +I_{2} ( \xi)  ,
\end{equation}
where
\begin{equation*}
I_{1} (  \xi )  = \sum_{ j=0 }^ \ell   C^ \ell _j  \langle \xi   \rangle [ (- \Delta  )^j   \widehat {f} ]
\end{equation*}
and
\begin{equation*} 
I_2 (\xi ) =  \sum_{\substack{  |\beta _1| +  |\beta _2| \le 2 \ell \\  |\beta _1| \ge 1}} C  _{ \beta _1, \beta _2 } [D^{\beta _1} \langle \xi \rangle  ]  [ D^{ \beta _2}  \widehat{f}  ]
\end{equation*} 
for some coefficients $C_{j_{1},j_{2}}$. 
We have
\begin{equation} \label{21}
I_{1} (  \xi )  = \sum_{ j=0 }^ \ell C^ \ell _j  \langle \xi   \rangle \Fou [  | x |^{2j} f ]
= \langle \xi   \rangle \Fou [  \langle x   \rangle^{2 \ell } f ]
= \Fou [ \langle i\nabla \rangle (  \langle x   \rangle^{2 \ell } f  ) ].
\end{equation}
Taking the
inverse Fourier transform in the expression for $I_{2}\left(  \xi\right)  $ we obtain
\begin{equation} \label{fOM1} 
\Fmu I_2    =  \sum_{\substack{  |\beta _1| +  |\beta _2| \le 2 \ell \\  |\beta _1| \ge 1}} C  _{ \beta _1, \beta _2 } w _{ \beta _1, \beta _2 }, 
\end{equation} 
where
\begin{equation} \label{20}
 w _{ \beta _1, \beta _2 } =  \Fmu [ (D^{\beta _1} \langle \xi \rangle  ) ( D^{ \beta _2}  \widehat{f} ) ] 
 = (2\pi )^{-N}   \int  _{ \R^N  } e^{i x\cdot \xi } (D^{\beta _1} \langle \xi \rangle ) ( D^{ \beta _2}  \widehat{f} )\, d\xi  .
\end{equation} 
We recall that 
\begin{equation} \label{fOM2} 
 | D^\beta \langle \xi \rangle  | \le C 
\end{equation} 
for all $ |\beta |\ge 1$. (See e.g.~\cite[formula~(A.2)]{CazNau1}.)
If $ |\beta _2|\le  \ell $, we write $ w _{ \beta _1, \beta _2 } = a \Fmu [ b \Fou (c f) ]$ with $a(x) \equiv 1$, $b(\xi ) \equiv  D^{\beta _1} \langle \xi \rangle$ and $c(x)= x^{\beta _2}$. Using~\eqref{fOM2}, we see that this is a term allowed by~\eqref{abc}. 
If $ |\beta _2|\ge  \ell +1$, we write $\beta _2= \beta _2' + \beta _2 ''$ with $ |\beta _2 '| = \ell $ and $  |\beta _2'' |\le  \ell -1$ (recall that $ |\beta _2|\le 2 \ell -1$ in the sum~\eqref{fOM1}).
After integration by parts, we obtain
\begin{equation*} 
\begin{split} 
 w _{ \beta _1, \beta _2 } & = (-1)^{ |\beta _2 ''|} (2\pi )^{-N}   \int  _{ \R^N  } D^{\beta _2''} [e^{i x\cdot \xi } (D^{\beta _1} \langle \xi \rangle )]  ( D^{ \beta _2'}  \widehat{f} )\, d\xi \\
  & = (-1)^{ |\beta _2 ''|} (2\pi )^{-N}  \sum_{  \beta _3+\beta _4= \beta _2 '' }  i ^{ |\beta _3|} x^{\beta _3} \int  _{ \R^N  } e^{i x\cdot \xi } (D^{\beta _1+ \beta _4} \langle \xi \rangle ) ( D^{ \beta _2'}  \widehat{f} )\, d\xi .
\end{split} 
\end{equation*} 
Using again~\eqref{fOM2}, we see that each of the terms in the above series has the appropriate form. 
The result now follows from~\eqref{24}, \eqref{21}  and~\eqref{fOM1}. 
\end{proof}

We now prove the following:

\begin{lemma} \label{eLE3:0}
 Assume~\eqref{fDInt1}-\eqref{fSpa1b1} with $\alpha=1$. 
 It follows that, given any $\psi\in \Spb $, $ e^{ \mns it\gamma\left\langle i\nabla
\right\rangle }\psi\in C([0,\infty), \Spb )$. Moreover, there exists a
constant $C$ such that
\begin{equation}  \label{eLE3:11:0}
\Vert e^{ \mns it\gamma \langle i\nabla \rangle }\psi\Vert_{ \Spb }\leq C(1+t)^{{n}}\Vert\psi \Vert_{ \Spb }
\end{equation}
for all $t\geq0$ and all $\psi\in \Spb $.
\end{lemma}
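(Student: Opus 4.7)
The plan is to split the $\Spb$-norm into its unweighted part ($0\le|\beta|\le 2m-2$) and its weighted part ($2m-1\le|\beta|\le J$). The unweighted part is conserved exactly since $e^{\mns it\gamma\langle i\nabla\rangle}$ is an isometry on every $H^s(\R^N,\C^2)$. For the weighted part, because $D^\beta$ commutes with $\gamma\langle i\nabla\rangle$, we have $D^\beta e^{\mns it\gamma\langle i\nabla\rangle}\psi = e^{\mns it\gamma\langle i\nabla\rangle}(D^\beta\psi)$; it therefore suffices to prove, for each $\phi\in\Sseul(\R^N,\C^2)$, the weighted bound
\begin{equation*}
\|\langle x\rangle^n e^{\mns it\gamma\langle i\nabla\rangle}\phi\|_{L^2}\le C(1+t)^n\|\langle x\rangle^n\phi\|_{L^2},
\end{equation*}
and then to extend by density of $\Sseul(\R^N,\C^2)$ in $\Spb$.

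I would prove the more general statement $\|\langle x\rangle^k e^{\mns it\gamma\langle i\nabla\rangle}\phi\|_{L^2}\le C_k(1+t)^k\|\langle x\rangle^k\phi\|_{L^2}$ by induction on $k=0,1,\ldots,n$. The base case $k=0$ is the isometry property. For the inductive step, set $u(t)=e^{\mns it\gamma\langle i\nabla\rangle}\phi$, so that $i\partial_t u=\gamma\langle i\nabla\rangle u$. Differentiating yields $\frac{d}{dt}\|\langle x\rangle^k u\|_{L^2}^2 = 2\operatorname{Im}(\gamma\langle i\nabla\rangle u,\langle x\rangle^{2k}u)$, where $(\,\cdot\,,\cdot\,)$ denotes the $L^2(\R^N,\C^2)$ inner product. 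Applying~\eqref{commutator} with $\ell=k$ to $f=\gamma u$, and using that $\gamma$ commutes with both $\langle x\rangle^{2k}$ and $\langle i\nabla\rangle$, gives
\begin{equation*}
\langle x\rangle^{2k}\gamma\langle i\nabla\rangle u=\gamma\langle i\nabla\rangle(\langle x\rangle^{2k}u)+R_ku,\qquad R_ku=\sum_{j=1}^{\nu}a_j\Fmu[b_j\Fou(c_j\gamma u)],
\end{equation*}
with $a_j,b_j,c_j$ satisfying~\eqref{abc}. Self-adjointness of $\gamma\langle i\nabla\rangle$ makes the first term contribute a real quantity to the inner product with $u$, so $|\frac{d}{dt}\|\langle x\rangle^k u\|_{L^2}^2|=|(R_ku,u)|$. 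Pairing $a_j$ with $u$ produces the factor $\|\langle x\rangle^{k-1}u\|_{L^2}$, while Plancherel together with the bounds $|b_j|\le C$ and $|c_j|\le C\langle x\rangle^k$ yields $\|\Fmu[b_j\Fou(c_j\gamma u)]\|_{L^2}\le C\|\langle x\rangle^k u\|_{L^2}$. Hence $|(R_ku,u)|\le C\|\langle x\rangle^{k-1}u\|_{L^2}\|\langle x\rangle^k u\|_{L^2}$, which gives $\frac{d}{dt}\|\langle x\rangle^k u\|_{L^2}\le C\|\langle x\rangle^{k-1}u\|_{L^2}$. Combined with the inductive hypothesis and the pointwise inequality $\|\langle x\rangle^{k-1}\phi\|_{L^2}\le\|\langle x\rangle^k\phi\|_{L^2}$, integrating in time closes the induction.

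Taking $k=n$, summing over $|\beta|\le J$, and combining with the exact conservation of unweighted $L^2$ norms yields~\eqref{eLE3:11:0} on the dense subspace $\Sseul(\R^N,\C^2)$, hence on all of $\Spb$ by density. Continuity $e^{\mns it\gamma\langle i\nabla\rangle}\psi\in C([0,\infty),\Spb)$ then follows from~\eqref{eLE3:11:0} applied to $e^{\mns i(t+h)\gamma\langle i\nabla\rangle}\psi-e^{\mns it\gamma\langle i\nabla\rangle}\psi$ together with the trivial continuity of the orbit through Schwartz data. The main delicate point is the bound on $(R_ku,u)$: the naive estimate $\|R_ku\|_{L^2}\le C\|\langle x\rangle^{2k-1}u\|_{L^2}$ followed by Cauchy--Schwarz would only give exponential-in-$t$ growth via Gronwall; the sharp polynomial rate $(1+t)^n$ requires the more careful pairing that distributes the $\langle x\rangle^{k-1}$ factor onto one copy of $u$ and the $\langle x\rangle^k$ factor onto the other, and then climbing the weights one unit at a time.
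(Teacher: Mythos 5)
Your proof is correct and takes essentially the same route as the paper: both arguments combine the commutator identity~\eqref{commutator} with the self-adjointness of $\gamma\langle i\nabla\rangle$ to derive the energy inequality $\frac{d}{dt}\|\langle x\rangle^{k}D^{\beta}v\|_{L^{2}}\le C\|\langle x\rangle^{k-1}D^{\beta}v\|_{L^{2}}$, climb one power of the weight at a time from $k=0$ (exact $L^2$-conservation) up to $k=n$, and finish by density of $\Srn$ in $\Spb$. The only cosmetic difference is that you commute $D^\beta$ through the flow at the outset rather than applying $D^\beta$ to the equation; and the one slightly loose point --- that self-adjointness makes $(\gamma\langle i\nabla\rangle(\langle x\rangle^{2k}u),u)$ real --- is not literally true, but as the paper shows in~\eqref{fOM3} its imaginary part is itself expressed through the commutator remainder, so your stated bound $\bigl|\tfrac{d}{dt}\|\langle x\rangle^{k}u\|_{L^{2}}^{2}\bigr|=|(R_{k}u,u)|$ is correct.
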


\begin{proof}
We first prove estimate~\eqref{eLE3:11:0} for $\psi\in \Srn $. Let $\psi\in  \Srn $ and
set $v(t)= e^{ \mns it\gamma\left\langle i\nabla\right\rangle }\psi$.
It follows by standard Fourier analysis that $v\in C^\infty  ([0,\infty), \Srn )$. Since the linear flow is
isometric on $H^{2{m-2}}( \R^N )$, we need only estimate the
weighted terms $\Vert \langle  x \rangle  ^{{n}}D^{\beta}v\Vert_{L^{2}},$
with $2{m} -1 \leq\left\vert \beta\right\vert \leq J$. We fix $2{m} -1 \leq\left\vert
\beta\right\vert \leq J$ and we prove that
\begin{equation}
\Vert\langle x\rangle^{{n}}D^{\beta}v\Vert_{L^{2}}\leq C\left(  1+t\right)
^{n}\sum_{  |\rho | =2{m} -1 }^{J} \Vert\langle x\rangle^{{n}}D^{\rho}%
\psi\Vert_{L^{2}}. \label{4}%
\end{equation}
Let $\ell \in \{ 1, \cdots, n \}$.
Applying $D^{\beta}$ to equation~\eqref{Low2}, multiplying by $\left\langle
x\right\rangle ^{2{ \ell }}D^{\beta}\overline{v}$, integrating on ${ \R }%
^{N}$, and taking the imaginary part, we obtain
\begin{equation} \label{3}
\frac{1}{2} \frac {d} {dt} \Vert\langle x\rangle^ \ell D^{\beta} v  \Vert_{L^{2}}
^{2 } = \Im \int_{ \R^N } (\langle x\rangle^{2 \ell }\gamma\langle
i\nabla\rangle D^{\beta}v)\cdot D^{\beta} \overline{v} . 
\end{equation}
We use now the commutation relation~\eqref{commutator} in~\eqref{3}. We have
\begin{equation}  \label{25}
\begin{split} 
\frac{1}{2} \frac {d} {dt} \Vert\langle x\rangle^ \ell D^{\beta}v\Vert_{L^{2}}^{2} = & 
\Im\int_{ \R^N } [  \gamma \langle i\nabla \rangle
 (   \langle \cdot \rangle ^{2 \ell }D^{\beta}v )  ]  \cdot
D^{\beta}\overline{v} \\ & 
 + \sum_{j=1}^{\nu  }\Im\int_{ \R^N } (\gamma  a_{j}   b_{j} (  \nabla )   [
c_{j} D^{\beta}v   ]    )\cdot D^{\beta}\overline{v}.
\end{split} 
\end{equation} 
Since $\gamma \langle i\nabla \rangle $ is self-adjoint, by using again (\ref{commutator}) we have
\begin{equation} \label{fOM3} 
\begin{split} 
  \Im\int_{ \R^N } & [  \gamma \langle i\nabla \rangle
 (   \langle \cdot \rangle ^{2 \ell }D^{\beta}v )  ]  \cdot
D^{\beta}\overline{v} \\
& = \frac {1} {2i}  \Bigl(    \int_{ \R^N } [  \gamma \langle i\nabla \rangle
 (   \langle \cdot \rangle ^{2 \ell }D^{\beta}v )  ]  \cdot
D^{\beta}\overline{v} -   \overline{ \int_{ \R^N }    \langle \cdot \rangle ^{2 \ell }D^{\beta}v   \cdot
[   \gamma \langle i\nabla \rangle D^{\beta}\overline{v} ] } \Bigr) \\
& = \frac {1} {2i}  \Bigl(    \int_{ \R^N } [  \gamma \langle i\nabla \rangle
 (   \langle \cdot \rangle ^{2 \ell }D^{\beta}v )  ]  \cdot
D^{\beta}\overline{v} -   \int_{ \R^N }    [   \gamma  \langle \cdot \rangle ^{2 \ell } \langle i\nabla \rangle D^{\beta} v ]  \cdot D^{\beta}  \overline{v} \Bigr) \\ 
& = - \frac {1} {2i } \sum_{j=1}^{m  }\Im\int_{ \R^N } (\gamma  a_{j}   b_{j} (  \nabla )   [
c_{j} D^{\beta}v   ]    )\cdot D^{\beta}\overline{v}.
\end{split} 
\end{equation} 
It follows from~\eqref{25}, \eqref{fOM3}, \eqref{abc}, and $ \ell \le n$, that
\begin{equation} \label{kg3}
\begin{split} 
\frac{1}{2} \frac {d} {dt} \Vert\langle x\rangle^ \ell D^{\beta}v\Vert_{L^{2}}^{2}  & \le 
C \sum_{j=1}^{\nu  } \int_{ \R^N } \langle x\rangle ^{ \ell -1} | b_{j} (  \nabla )   [
c_{j} D^{\beta}v   ] | \, |  D^{\beta}\overline{v} | \\ & \le 
C \sum_{j=1}^{\nu  }    \| b_{j} (  \nabla )   [
c_{j} D^{\beta}v   ] \| _{ L^2 }  \| \langle \cdot \rangle ^{ \ell -1} D^\beta v\| _{ L^2 }
 \\ & \le 
C \|  \langle \cdot \rangle ^ \ell D^{\beta}v  \| _{ L^2 }  \| \langle \cdot \rangle ^{ \ell -1} D^\beta v\| _{ L^2 } .
\end{split} 
\end{equation} 
Hence 
\begin{equation*} 
\begin{split} 
\Vert\langle x\rangle^ \ell D^{\beta}v (t) \Vert_{L^{2}} & \le  \Vert\langle x\rangle^ \ell D^{\beta} \psi \Vert_{L^{2}} + C t 
\sup _{ 0<\tau <t } \Vert\langle x\rangle^{ \ell -1} D^{\beta}v ( \tau ) \Vert_{L^{2}} \\ & \le  \Vert\langle x\rangle^ n D^{\beta} \psi \Vert_{L^{2}} + C t  \sup _{ 0<\tau <t } \Vert\langle x\rangle^{ \ell -1} D^{\beta}v ( \tau ) \Vert_{L^{2}} .
\end{split} 
\end{equation*} 
We apply successively the above estimate with $\ell =1, \cdots, n$. 
Since $  \Vert   D^{\beta}v ( \tau ) \Vert_{L^{2}} =  \Vert   D^{\beta} \psi  \Vert_{L^{2}} $, we conclude that
\begin{equation*} 
\Vert\langle x\rangle^ n D^{\beta}v (t) \Vert_{L^{2}} \le C (1+t)^n \Vert\langle x\rangle^ n D^{\beta} \psi \Vert_{L^{2}} .
\end{equation*} 
This last estimate proves~\eqref{4}. 

Let now $\psi \in \Spb$ and $(\psi _n) _{ n\ge 1 }\subset \Srn$ such that $\psi _n \to \psi $ in $\Spb$ as $n\to \infty $. 
Applying\eqref{4} with $\psi $ replaced by $\psi _m-\psi _n$, we deduce that for every $T>0$, $ e^{ \mns it\gamma \langle i\nabla \rangle }\psi _n$ is a Cauchy sequence in $L^\infty ((-T, T), \Spb )$. It follows that $ e^{ \mns it\gamma \langle i\nabla \rangle }\psi  $ belongs to $C([-T, T], \Spb )$ and satisfies~\eqref{4} for all $t \in [-T, T] $. Since $T>0$ is arbitrary, this completes the proof. 
\end{proof}

\begin{lemma} \label{eSTE1} 
 Assume~\eqref{fDInt1}-\eqref{fSpa1b1} with $\alpha=1$.  
It follows that there exists a constant $C$ such that 
\begin{equation} \label{feSTE1:1} 
\sup  _{ 2m-1 \le  |\beta |\le 2m } \| \langle \cdot \rangle ^n D^\beta u \| _{ L^\infty  } \le C \Vert u \Vert_{ \Spb }
\end{equation} 
for all $u\in \Spb $.
\end{lemma}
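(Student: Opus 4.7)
The plan is to reduce the $L^\infty$ bound on the weighted function $\langle x\rangle^n D^\beta u$ to an $H^k$ bound via Sobolev embedding, then use the Leibniz rule to bound $\|\langle x\rangle^n D^\beta u\|_{H^k}$ directly by the terms appearing in $\|u\|_\Spb$. The key numerical facts are $k > N/2$ (so $H^k \hookrightarrow L^\infty$) and $2m - 1 \le |\beta| \le 2m$ together with $J = 2m + k + 2$, which guarantees that all mixed derivatives that appear fall in the range where the $\Spb$ norm provides control.

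Concretely, I would fix $\beta$ with $2m - 1 \le |\beta| \le 2m$ and an arbitrary multi-index $\rho$ with $|\rho| \le k$, and expand
\begin{equation*}
D^\rho \bigl(\langle x\rangle^n D^\beta u\bigr) = \sum_{\rho_1 + \rho_2 = \rho} C_{\rho_1, \rho_2} (D^{\rho_1}\langle x\rangle^n)(D^{\rho_2 + \beta} u).
\end{equation*}
Using the elementary pointwise bound $|D^{\rho_1} \langle x\rangle^n| \le C \langle x\rangle^n$ (valid for all multi-indices $\rho_1$, since $\langle x\rangle^n = (1+|x|^2)^{n/2}$ is smooth and derivatives gain decay), the $L^2$ norm of each summand is controlled by $C \|\langle x\rangle^n D^{\rho_2 + \beta} u\|_{L^2}$. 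Since $|\rho_2 + \beta|$ ranges over $[2m-1, 2m+k]$ and $2m + k \le J$, each such term is directly bounded by the second sum in the definition~\eqref{fDfnSpbD2} of $\|u\|_\Spb$.

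Summing over $|\rho| \le k$ gives $\|\langle x\rangle^n D^\beta u\|_{H^k} \le C \|u\|_\Spb$. Since $k > N/2$ by the first condition in~\eqref{fDInt1}, the Sobolev embedding $H^k(\R^N) \hookrightarrow L^\infty(\R^N)$ yields~\eqref{feSTE1:1}.

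I do not anticipate any real obstacle here: the statement is essentially the observation that the $\Spb$-norm contains enough high-derivative weighted $L^2$ information to invoke Sobolev embedding on the weighted function. The only point worth checking carefully is the index bookkeeping, namely that for every $|\beta| \in \{2m-1, 2m\}$ and every $|\rho_2| \le |\rho| \le k$, the shifted index $|\rho_2 + \beta|$ satisfies $2m-1 \le |\rho_2 + \beta| \le J$, which follows immediately from $J = 2m + k + 2$ in~\eqref{fSpa1b1}.
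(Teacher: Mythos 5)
Your proposal is correct and follows essentially the same route as the paper: reduce the weighted $L^\infty$ norm to a weighted $H^k$ norm via Sobolev embedding (using $k>N/2$), expand by the Leibniz rule, bound derivatives of $\langle x\rangle^n$ by $C\langle x\rangle^n$, and observe the resulting indices lie in $[2m-1,\,2m+k]\subset[2m-1,\,J]$. The only cosmetic difference is that the paper first reduces to $u\in\Srn$ by density before carrying out the same calculation; your bookkeeping (which is the crux) matches the paper's.
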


\begin{proof} 
By density of $ \Srn $ in $ \Spb $, the result
follows if we prove estimate~\eqref{feSTE1:1} for $u \in \Srn $. Let $u \in \Srn $ and $ 2m-1\le  |\beta |\le 2m$. 
Since $H^k (\R^N ) \hookrightarrow L^\infty  (\R^N ) $ by~\eqref{fDInt1}, we have
\begin{equation*} 
\| \langle \cdot \rangle ^n D^\beta u \| _{ L^\infty  } \le C  \| \langle \cdot \rangle ^n D^\beta u \| _{ H^k } 
\le C \sum_{  |\rho |\le k } \| D^\rho ( \langle \cdot \rangle ^n D^\beta u ) \| _{ L^2 } .
\end{equation*} 
Moreover (see e.g.~\cite[Lemma A.1]{CazNau}), 
\begin{equation*}
\vert D^{\rho }(\langle x\rangle^{\eta} D^\beta u) \vert \leq C\sum
_{j=0}^{|\rho |}\langle x\rangle^{ n -|\rho |+j}\sum_{|\rho '| = j}|D^{\rho ' +\beta }u|  
\leq C \langle x\rangle^n \sum_{|\rho '| =  |\beta | } ^{ |\rho | +  |\beta |} |D^{\rho ' }u|  .
\end{equation*}
It follows that
\begin{equation*} 
\| \langle \cdot \rangle ^n D^\beta u \| _{ L^\infty  }  \le C   \sum_{|\rho | = 2m-1 } ^{2m +k } \| \langle x\rangle^n D^{\rho }u \| _{ L^2 }.
\end{equation*} 
Hence~\eqref{feSTE1:1} holds.
\end{proof}  

\begin{proof}
[Proof of Proposition~$\ref{eLE3}$]Since $\overline{e^{ it\gamma \langle
i\nabla t\rangle }  \psi }  = e^{ \mns it\gamma \langle i\nabla \rangle
} \overline{\psi}$ and the map $\psi\mapsto\overline{\psi}$ is isometric $ \Spa  \rightarrow \Spa $, we can restrict ourselves to the case $t\geq0$.

We let $\psi\in \Spa $. As before, we set $v(t)= e^{ \mns it\gamma\left\langle
i\nabla\right\rangle }\psi$. We begin by proving that if $t_{0}>0$ is given by Lemma~\ref{eLE1}, then $v(t)\in \Spa $ for all $0\leq t\leq t_{0}$ and
\eqref{eLE3:11} holds. By the definition \eqref{fSpa2} of the norm in the
space $ \Spa $, the estimate~\eqref{feSTE1:1}, and the embedding~\eqref{fInjYX}, we have
\[
\sum_{{  |\beta | }=0}^{2{m}} \Vert\langle x\rangle^{{n}}D^{\beta}%
\psi\Vert_{L^{\infty}}\leq C\Vert\psi\Vert_{ \Spa }.
\]
Then, using~\eqref{Low5b1} we have%
\begin{align*}
  \sum_{{j}=0}^{2{m}} \sup_{|\beta|={j}}\Vert\langle x\rangle^{{n}}D^{\beta
}v &\Vert_{L^{\infty}((0,t)\times \R^N )}\leq C(1+t)^{2m } 
\Vert\psi\Vert_{ \Spa } \\
&  +Ct(1+t)^{2m }\sup_{|\beta|=2{m}}\Vert\langle x\rangle^{{n}}D^{\beta
}v\Vert_{L^{\infty}((0,t)\times \R^N )}, 
\end{align*}
for all $0\leq t\leq t_{0}$. We estimate the last term in the above inequality
by~\eqref{feSTE1:1} and~\eqref{eLE3:11:0}. For $ |\beta |=2m$ we have
\begin{equation*} 
\begin{split} 
\Vert\langle x\rangle^{{n}}D^{\beta
}v\Vert_{L^{\infty}((0,t)\times \R^N )} & = \sup  _{ 0<s<t } \Vert\langle x\rangle^{{n}}D^{\beta
}v (s) \Vert_{L^{\infty} } \\ & \le C \sup  _{ 0<s<t } \Vert v (s) \Vert_\Spb 
\\ & \le C (1+t)^{{n}}\Vert\psi\Vert_{ \Spb } .
\end{split} 
\end{equation*} 
Using~\eqref{fInjYX}, we deduce that
\begin{equation} \label{5} 
 \sum_{{j}=0}^{2{m}}\sup_{|\beta|={j}}\Vert\langle x\rangle^{{n}}D^{\beta
}v\Vert_{L^{\infty}((0,t)\times \R^N )}\leq C(1+t)^{2m +n+1} 
\Vert\psi\Vert_{ \Spa } .
\end{equation} 
Using again~\eqref{eLE3:11:0}, we conclude that  $v(t)\in \Spa $ for all $0\le t\le t_0$ and that
 estimate~\eqref{eLE3:11} holds. 

Let us prove now \eqref{eLE3:12}. We consider a multi-index $\beta$ with
$0\leq|\beta|\leq2{m}$. Using \eqref{fInt1} and \eqref{Besselestimate} we
estimate%
\begin{align*}
\Vert\langle x\rangle^{n}D^{\beta}(v(t)-\psi)\Vert_{L^{\infty}}  &  \leq
\sum_{j=0}^{2m}\int_{0}^{t}\sup_{|\beta|=j}\Vert\langle x\rangle^{n}\langle
i\nabla\rangle^{-1}D^{\beta}v(s)\Vert_{L^{\infty}}ds\\
&  \leq t\sum_{j=0}^{2m}\sup_{|\beta|=j}\Vert\langle x\rangle^{n}D^{\beta
}v\Vert_{L^{\infty}((0,t)\times \R^N )}.
\end{align*}
Using \eqref{5} we conclude that~\eqref{eLE3:12} holds.

By Lemma~\ref{eLE3:0},  $v\in C([0,\infty), \Spb )$. Moreover, by
\eqref{eLE3:12} $v$ is continuous at $t=0$ in weighted $L^{\infty}$ norms.
Thus, $v$ is continuous at $t=0$ in $ \Spa $ norm. By the semigroup property we
conclude that $v\in C([0, \infty ), \Spa )$. This completes the proof.
\end{proof}

\begin{remark}
In particular, Proposition~\ref{eLE3} and \eqref{Low4} show that for $\psi
\in \Spa $ satisfying $\inf_{x\in \R^N }\langle x\rangle
^{n}|\psi(x)|>0$, the estimate from below $\inf_{x\in \R^N }\langle
x\rangle^{n}| e^{ \mns it\gamma\left\langle i\nabla\right\rangle }\psi(x)|>0$ holds,
for all $|t|$ sufficiently small. We do not know if this small time
requirement is necessary.
\end{remark}

We now prove estimates for the linear Dirac group $(e^{-itH})_{t\in \R }$ on the space
\begin{equation} \label{fDfnSpatilde} 
\SpaD = \Spa_ \CDiracd 
\end{equation} 
similar to the ones established in Proposition~\ref{eLE3}  for the group $(e^{-it\gamma \langle i\nabla
 \rangle })_{t\in \R }$. The free Dirac operator $H$
defined by~\eqref{HDirac}  with domain $D (  H )
= H^1 (\R^N , \C^\CDiracd) $ is a self-adjoint operator on $L^{2} ( \R^N , \C^{ \CDiracd} )  $, see~\cite{Thaller}.
Then, $(e^{itH})_{i \in \R}$ is a group of isometries on
$L^{2} ( \R^N , \C^{ \CDiracd} )  $, and on
$H^s (\R^N , \C^\CDiracd) $ for all
$s\in \R$. We have the following.

\begin{proposition} \label{eLE3Dirac}
Assume \eqref{fDInt1}-\eqref{fSpa1b1} with $\alpha=1$, and
let the space $ \SpaD $ be defined by
\eqref{fSpa1}-\eqref{fSpa2}, \eqref{fDfnell} and~\eqref{fDfnSpatilde}.
It follows that $e^{- i t H} \Psi \in C (\R, \SpaD)$ for all $\Psi \in  \SpaD$.
Moreover, there exist $C>0$  and $t_{0}>0$ such that
\begin{equation*} 
\Vert e^{-itH} \Psi \Vert_\SpaD  \leq C(1+|t|)^{2m+n+1}\Vert  \Psi \Vert_ \SpaD , 
\end{equation*} 
and
\begin{equation*} 
\sup_{|\beta|\leq2{m}} \Vert\langle x\rangle^{n}D^{\beta}(e^{-itH} \Psi 
-\psi)\Vert_{L^{\infty}}\leq C|t|(1+|t|)^{2m+n+1}\Vert \Psi \Vert
_\SpaD ,
\end{equation*} 
for all $\left\vert t\right\vert \leq t_{0}$ and all $ \Psi \in \SpaD $.
\end{proposition}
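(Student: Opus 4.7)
The plan is to follow the structure of the proof of Proposition~\ref{eLE3}, exploiting two features of the Dirac operator that simplify matters compared with the Klein-Gordon case. First, the anti-commutation relations~\eqref{anti-commutation} give the identity
\begin{equation*}
H^{2}= -\sum_{j,k}\gamma_{j}\gamma_{k}\partial_{j}\partial_{k}-i\sum_{k}(\gamma_{k}\eta+\eta\gamma_{k})\partial_{k}+\eta^{2}= (1-\Delta)I,
\end{equation*}
so $H^{2}$ plays exactly the role of $\langle i\nabla\rangle^{2}$ in Taylor expansions. Second, $H$ is a constant-coefficient first-order differential operator: it commutes with every $D^{\beta}$, and the commutator $[H,\langle x\rangle^{2\ell}]$ reduces to multiplication by the smooth matrix-valued function $-i\sum_{k}\gamma_{k}\partial_{k}\langle x\rangle^{2\ell}$, which is bounded by $C\langle x\rangle^{2\ell-1}$. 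These two observations will replace, respectively, the Bessel potential bound of Lemma~\ref{BE} and the pseudo-differential commutator identity~\eqref{commutator} used for the Klein-Gordon equation.

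First I would prove the weighted $L^{2}$ analogue of Lemma~\ref{eLE3:0}. For $\psi\in\Srn$, set $v(t)=e^{-itH}\psi$, fix $\beta$ with $2m-1\le|\beta|\le J$, and note that $D^{\beta}v$ solves $i\partial_{t}D^{\beta}v=HD^{\beta}v$ because $H$ has constant coefficients. For $1\le\ell\le n$, multiplying by $\langle x\rangle^{2\ell}D^{\beta}\overline{v}$, integrating over $\R^{N}$, taking the imaginary part, and using the self-adjointness of $H$ yields
\begin{equation*}
\tfrac{1}{2}\tfrac{d}{dt}\|\langle x\rangle^{\ell}D^{\beta}v\|_{L^{2}}^{2}=-\operatorname{Im}\int_{\R^{N}}([H,\langle x\rangle^{2\ell}]D^{\beta}v)\cdot D^{\beta}\overline{v};
\end{equation*}
the commutator bound together with Cauchy--Schwarz then gives
\begin{equation*}
\tfrac{d}{dt}\|\langle x\rangle^{\ell}D^{\beta}v\|_{L^{2}}\le C\|\langle x\rangle^{\ell-1}D^{\beta}v\|_{L^{2}}.
\end{equation*}
Integrating and iterating on $\ell=1,\dots,n$ (with the $\ell=0$ case supplied by the $L^{2}$-isometry property of $e^{-itH}$) delivers, as in the proof of Lemma~\ref{eLE3:0}, the growth $(1+|t|)^{n}$ in the $\SpbD$ norm, where $\SpbD$ is the $\C^{\CDiracd}$-valued analogue of $\Spb$. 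A density argument then extends the estimate to all $\psi\in\SpbD$.

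Next I would establish the weighted $L^{\infty}$ analogue of Lemma~\ref{eLE1}. For $0\le\ell\le m-1$ and $|\beta|=2\ell$, Taylor's formula with integral remainder of order $2(m-\ell)$, together with the identity $(-iH)^{2(m-\ell)}=(-1)^{m-\ell}(1-\Delta)^{m-\ell}I$, gives
\begin{equation*}
D^{\beta}v(t)=\sum_{j=0}^{2m-2\ell-1}\frac{(-it)^{j}}{j!}D^{\beta}H^{j}\psi+c_{m,\ell}\int_{0}^{t}(t-s)^{2m-2\ell-1}D^{\beta}(1-\Delta)^{m-\ell}v(s)\,ds.
\end{equation*}
Since $H$ is a first-order differential operator with constant matrix coefficients, $H^{j}\psi$ is a linear combination of $D^{\rho}\psi$ with $|\rho|\le j$; consequently $D^{\beta}H^{j}\psi$ is pointwise bounded by $C\sum_{|\rho|\le 2m-1}|D^{\rho}\psi|$, with \emph{no} need for any Bessel potential estimate. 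Expanding $(1-\Delta)^{m-\ell}$ and following the iteration in the proof of Lemma~\ref{eLE1} (with $t_{0}$ chosen small as in~\eqref{fCndTz}) yields the exact Dirac counterpart of~\eqref{Low5b1}.

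Finally I would combine the two estimates exactly as in the proof of Proposition~\ref{eLE3}. The remaining supremum of $\|\langle x\rangle^{n}D^{\beta}v\|_{L^{\infty}}$ for $|\beta|=2m$ is controlled, via a Sobolev-embedding argument analogous to Lemma~\ref{eSTE1}, by $C\|v(t)\|_{\SpbD}\le C(1+|t|)^{n}\|\psi\|_{\SpaD}$; inserting this into the weighted $L^{\infty}$ estimate gives the first bound of the proposition with exponent $2m+n+1$. The bound on $e^{-itH}\Psi-\Psi$ follows from the identity $v(t)-\psi=-i\int_{0}^{t}Hv(s)\,ds$ and the fact that $\|\langle x\rangle^{n}D^{\beta}Hv\|_{L^{\infty}}$ is controlled by derivatives of one order higher, already bounded by the first part of the proposition; again Lemma~\ref{BE} is unnecessary. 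Continuity of $t\mapsto e^{-itH}\Psi$ in $\SpaD$ follows from continuity in $\SpbD$ together with uniform continuity at $t=0$ in the weighted $L^{\infty}$ norm, extended to all $t$ by the group property. The only mildly delicate point is the verification of the commutator identity at the start, which is precisely where the structural simplification over the Klein-Gordon case is visible; the rest of the argument is then a transcription of the proof of Proposition~\ref{eLE3} with $H$ in place of $\gamma\langle i\nabla\rangle$.
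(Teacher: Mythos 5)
Your proposal follows essentially the same route as the paper's own proof: it uses $H^2=(1-\Delta)I$ from the anti-commutation relations to drive the Taylor-remainder iteration for the weighted $L^\infty$ estimate, replaces the Klein--Gordon Bessel-potential bound by the observation that $H^j\psi$ is a constant-coefficient differential polynomial in $\psi$, and for the weighted $L^2$ estimate exploits the multiplication-operator commutator $[H,\langle x\rangle^{2\ell}]=-2\ell i\langle x\rangle^{2\ell-2}\sum_k\gamma_kx_k$ (the paper states the equivalent $[\langle x\rangle^{2k},H]$), closing the argument via Sobolev embedding and the group property exactly as in Proposition~\ref{eLE3}. The only difference is cosmetic: you present the $L^2$ lemma before the $L^\infty$ lemma, whereas the paper reverses the order.
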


\begin{proof}
The proof is similar to the proof of Proposition~\ref{eLE3}. We only point out
the differences. To prove an estimate similar to Lemma~\ref{eLE1}, for a given
$0\leq{k}\leq{m-1}$ we apply Taylor's formula with integral remainder
involving the derivative of order $2 (  {m}-{k} )  $ to the function
$v=e^{-itH} \Psi $. Using the commutation relations \eqref{anti-commutation} we
see that $H^{2}=-\Delta+I $. It follows that $v(t) = e^{-itH} \Psi $ satisfies
\begin{equation*}
\begin{split} 
v(t)    = & \sum_{{j}=0}^{2{m}-2{k}-1}\frac{(it\gamma)^{{j}}}{{j}!}H^{{j}} \Psi \\
&  +\frac{\left(  i\gamma\right)  ^{2{m}-2{k}}}{(2{m}-2{k-1})!}\int_{0}%
^{t}(t-s)^{2{m}-2{k}-1}\left(  1-\Delta\right)  ^{ m- {k}}v(s)\,ds.
\end{split} 
\end{equation*}
Then, similarly to (\ref{kg1}), we deduce that
\begin{equation*}
\begin{split} 
\Vert\langle x\rangle^{{n}}D^{\beta}v\Vert_{L^{\infty}((0,t)\times{\mathbb{R}%
}^{N})}    \leq & C(1+t)^{2{m}}\sum_{{j}=0}^{{2m-2k-1}}\Vert\langle
x\rangle^{{n}}H^{j}D^{\beta} \Psi \Vert_{L^{\infty} }\\
&  +Ct(1+t)^{2{m}}\sum_{j={0}}^{{m-k}}\Vert\langle x\rangle^{{n}}D^{\beta
}\Delta^{j}v\Vert_{L^{\infty}((0,t)\times{\mathbb{R}}^{N})} ,
\end{split} 
\end{equation*}
where the $L^\infty $ norms are for $C^\CDiracd$-valued functions.
Since the matrices $\gamma_{j}$ and $\eta$ are unitary, we have
\begin{equation*} 
\Vert\langle x\rangle^{{n}}H^{j}D^{\beta} \Psi \Vert  _{ L^\infty } \leq C \sum_{|\rho|\leq2m}\Vert\langle x\rangle^{{n}}D^{\rho} \Psi  \Vert  _{ L^\infty } .
\end{equation*} 
Then, continuing as in the proof of Lemma \ref{eLE1} we obtain that
there exist $C>0$ and $t_{0}>0$ such that 
\begin{equation} \label{kg5}
\begin{split} 
 \sum _{{j}=0}^{2m-2}
 \sup & _{|\beta|={j}}\Vert\langle x\rangle^{{n}}D^{\beta}e^{-isH} \Psi 
\Vert_{L^{\infty}((0,t)\times \R^N  ) } \\
 \leq & C(1+t)^{2m} \sum _{|j|\leq2m} \Vert\langle x\rangle^{{n}}D^{\beta} \Psi \Vert_{L^{\infty} } \\ 
& +Ct(1+t)^{2m}\sup_{|\beta|=2{m}}\Vert\langle x\rangle^{{n}}D^{\beta}
e^{-isH} \Psi \Vert_{L^{\infty}((0,t)\times \R^N )},
\end{split} 
\end{equation} 
for all $0\leq t\leq t_{0}$ and all $\psi\in H^{{J}}({\mathbb{R}}^{N})$.

Next, we consider the space $\SpbD$ defined similarly to $\Spb $ (i.e. by~\eqref{fDfnSpbD1}-\eqref{fDfnSpbD2}), but for $C^\CDiracd$-valued functions instead of $C^2$-valued functions.
We claim that there exists a constant $C$ such that
\begin{equation} \label{kg4} 
\Vert e^{-itH} \Psi \Vert_ \SpbD \leq C(1+t)^{{n}}\Vert \Psi  \Vert _ \SpbD ,
\end{equation}
for all $t\geq0$ and all $\Psi \in \SpbD$. 
To see this, we consider $\Psi 
\in \Srn ^\CDiracd $ and we set $v(t)=e^{-itH} \Psi $, so that
$v\in C([0,\infty), \Srn ^\CDiracd )$.\ Let $k\in
\{1,\cdots,n\}$. Since $H$ is self-adjoint, similarly to~\eqref{3} we obtain
\begin{equation*} 
\frac{1}{2}\frac{d}{dt}\Vert\langle x\rangle^{k}D^{\beta}v \Vert_{L^{2}} 
^{2}= \Im \int_{ \R^N } (\langle x\rangle^{2k}HD^{\beta}v)\cdot
D^{\beta}\overline{v}.
\end{equation*} 
We now use the commutator relation 
\begin{equation*} 
[ \langle x\rangle^{2k},H]=2ki\langle x\rangle^{2k-2}\sum_{j=1}^{N}  \gamma_{j}x_{j}, 
\end{equation*} 
to prove similarly to~\eqref{kg3} that 
\begin{equation*} 
\frac{1}{2}\frac{d}{dt}\Vert\langle\cdot\rangle^{k}D^{\beta}v\Vert_{L^{2}}%
^{2}\leq C\Vert\langle\cdot\rangle^{k}D^{\beta}v\Vert_{L^{2}}\Vert\langle
\cdot\rangle^{k-1}D^{\beta}v\Vert_{L^{2}}. 
\end{equation*} 
Following the proof of Lemma \ref{eLE3:0}, we deduce~\eqref{kg4}.

Finally, we can use~\eqref{kg5} and~\eqref{kg4} to complete the proof of Proposition \ref{eLE3Dirac}
like the proof of Proposition~\ref{eLE3}. 
\end{proof}

\section{The nonlinear estimates} \label{sNL}

With the linear estimates at our disposal, we now establish estimates of the nonlinear terms.

We first estimate $ \NL  (  u )  $ in the space  $ \Spa $.

\begin{proposition} \label{eNL1} 
Let $\alpha>0$ and assume~\eqref{fDInt1}-\eqref{fSpa1b1}. 
Let $\Spa$ be defined by \eqref{fSpa1}-\eqref{fSpa2} and~\eqref{fDfnSpa}, and let $\NL (u)$ be defined by~\eqref{fdefnnl}.  
For
every $\eta>0$ and $u\in \Spa $ satisfying
\begin{equation}
\eta\inf_{x\in \R^N }(\langle x\rangle^{n}|u(x)|)\geq1
\label{eNL1:2}%
\end{equation}
it follows that $ \NL  (  u )  \in \Spa $. Moreover,
there exists a constant $C$ such that
\begin{equation}
\Vert \NL  (  u ) \Vert_ \Spa \leq C(1+\eta\Vert
u\Vert_ \Spa )^{2{J}}\Vert u\Vert_ \Spa ^{\alpha+1}
\label{eNL1:1}%
\end{equation}
for all $\eta>0$ and $u\in \Spa $ satisfying~\eqref{eNL1:2}.
Furthermore,
\begin{align}
  \Vert \NL & (  u_{1} )  - \NL  (  u_{2} ) \Vert_ \Spa \nonumber\\
&  \leq C\left(  1+\eta(\Vert u_{1}\Vert_ \Spa +\Vert u_{2}%
\Vert_ \Spa )\right)  ^{2{J}+1}(\Vert u_{1}\Vert_ \Spa +\Vert
u_{2}\Vert_{ \Spa })^{\alpha}\Vert u_{1}-u_{2}\Vert_ \Spa 
\label{eNL1:2b2}%
\end{align}
for all $\eta>0$ and $u, u_{1},u_{2}\in \Spa $ satisfying~\eqref{eNL1:2}.
\end{proposition}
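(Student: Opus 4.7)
The plan is to reduce~\eqref{eNL1:1} to a pointwise product estimate on the scalar function $g(w)=|w|^\alpha w$ with $w=\mathbf{a}\cdot u$, and then to transfer the bound through $\langle i\nabla\rangle^{-1}$. Since $\langle i\nabla\rangle^{-1}$ commutes with every $D^\beta$ as Fourier multipliers and Lemma~\ref{BE} bounds $\langle x\rangle^n\langle i\nabla\rangle^{-1}$ in $L^p$ for $p\in\{2,\infty\}$, it suffices to control $\langle x\rangle^n D^\beta g(w)$ in $L^\infty$ for $0\le|\beta|\le 2m-2$ and in $L^2$ for $2m-1\le|\beta|\le J$; the vectors $\mathbf{a},\mathbf{b}$ play no role in the norm count. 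Under~\eqref{eNL1:2} the function $w$ does not vanish on $\R^N$, so $g(w)$ is smooth and the multivariate chain rule gives
\[
|D^\beta g(w)|\le C\sum_{P\in\pi(\beta)}|w|^{\alpha+1-|P|}\prod_{B\in P}|D^Bw|,
\]
where the sum runs over set partitions $P$ of $\beta$ and $|P|$ is its number of parts; this uses $|\partial_z^a\partial_{\bar z}^b g(w)|\le C|w|^{\alpha+1-a-b}$, valid since $g(z)=(z\bar z)^{\alpha/2}z$ is homogeneous of degree $\alpha+1$.

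Next, I would exploit the lower bound $|w|^{-1}\le\eta\langle x\rangle^n$ to rewrite each summand as $|w|^{\alpha+1}\prod_B(|D^Bw|/|w|)\le|w|^{\alpha+1}\prod_B(\eta\langle x\rangle^n|D^Bw|)$, and combined with $\langle x\rangle^n|w|^{\alpha+1}\le\|w\|_{L^\infty}^\alpha\|\langle x\rangle^n w\|_{L^\infty}\le\|u\|_\Spa^{\alpha+1}$ obtain the pointwise estimate
\[
\langle x\rangle^n|D^\beta g(w)|\le C\|u\|_\Spa^{\alpha+1}\sum_{P\in\pi(\beta)}\prod_{B\in P}\bigl(\eta\langle x\rangle^n|D^Bw|\bigr).
\]
For $|\beta|\le 2m-2$ every block has $|B|\le 2m-2$ and $\eta\langle x\rangle^n|D^Bw|\le\eta\|u\|_\Spa$, so the supremum norm is controlled by $C(1+\eta\|u\|_\Spa)^J\|u\|_\Spa^{\alpha+1}$. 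For $2m-1\le|\beta|\le J$ condition~\eqref{fDInt1} (which gives $2m\ge k+n+3\ge k+5$, hence $J=2m+2+k<2(2m-1)$) forces at most one block $B_1$ to satisfy $|B_1|\ge 2m-1$; placing the corresponding factor in $L^2$ (bounded by $\eta\|u\|_\Spa$ via the $\Spb$-norm, using Lemma~\ref{eSTE1} when $|B_1|\in\{2m-1,2m\}$) and the others in $L^\infty$, H\"older's inequality together with $\Spa\hookrightarrow\Spb$ yields~\eqref{eNL1:1}, with $2J$ a comfortable bound on the number of $\eta$-factors that arise.

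The Lipschitz bound~\eqref{eNL1:2b2} is handled by the fundamental theorem of calculus along $w_s=\mathbf{a}\cdot(su_1+(1-s)u_2)$:
\[
g(\mathbf{a}\cdot u_1)-g(\mathbf{a}\cdot u_2)=\int_0^1\bigl(g_z(w_s)\,\mathbf{a}\cdot(u_1-u_2)+g_{\bar z}(w_s)\,\overline{\mathbf{a}\cdot(u_1-u_2)}\bigr)\,ds.
\]
The hypothesis on $u_1,u_2$ ensures $|w_s|\ge\eta^{-1}\langle x\rangle^{-n}$ uniformly in $s\in[0,1]$, so the previous pointwise expansion applies to $D^\beta[g_z(w_s)\,\mathbf{a}\cdot(u_1-u_2)]$ with one derivative factor of $w$ replaced by a derivative of $\mathbf{a}\cdot(u_1-u_2)$; the same counting then produces one factor $\|u_1-u_2\|_\Spa$ and one extra power of $(1+\eta(\|u_1\|_\Spa+\|u_2\|_\Spa))$ to account for the simultaneous appearance of $u_1$ and $u_2$ in each summand. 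The main technical obstacle throughout is the combinatorial bookkeeping: verifying that for every partition $P\in\pi(\beta)$ the exponent of $\eta$ does not exceed the claimed $2J$ (respectively $2J+1$), a count that ultimately follows from $|P|\le|\beta|\le J$; the fractional power $\alpha$ causes no difficulty because the entire computation takes place on the open set $\{w\ne 0\}$ guaranteed by~\eqref{eNL1:2}.
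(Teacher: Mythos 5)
For the estimate~\eqref{eNL1:1} your argument is essentially the paper's: you expand $D^\beta(|w|^\alpha w)$ via Fa\`a di Bruno over partitions, while the paper expands via Leibniz into the terms $A=|u|^\alpha D^\beta u$ and $B=|u|^{\alpha-2p}D^\rho u\prod_{j}D^{\gamma_{1,j}}u\,D^{\gamma_{2,j}}\overline u$, but these are the same decomposition. The reduction through Lemma~\ref{BE}, the pointwise insertion of the lower bound $|w|^{-1}\le\eta\langle x\rangle^n$, and the key numerical observation that $J\le 4m-3<2(2m-1)$ forces at most one block of order $\ge 2m-1$ (which is placed in $L^2$ while the others go in $L^\infty$) are identical to the paper's.

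The Lipschitz estimate~\eqref{eNL1:2b2} is where you genuinely depart from the paper, and here there is a gap. You integrate $\tfrac{d}{ds}g(w_s)$ along $w_s=\mathbf{a}\cdot(su_1+(1-s)u_2)$ and assert that the hypothesis~\eqref{eNL1:2} on $u_1,u_2$ gives $|w_s|\ge\eta^{-1}\langle x\rangle^{-n}$ uniformly in $s\in[0,1]$. This is false: the set $\{z\in\C:|z|\ge r\}$ is not convex, so the convex combination of two points of modulus $\ge r$ can vanish (take $w_1(x)=r$, $w_2(x)=-r$, $s=1/2$). Since $J$ can far exceed $\alpha+1$, your expansion of $D^\beta\bigl[g_z(w_s)\,\mathbf{a}\cdot(u_1-u_2)\bigr]$ produces factors $|w_s|^{\alpha-k}$ with $k>\alpha$, and without a lower bound on $|w_s|$ the $s$-integral is not controlled by the right-hand side of~\eqref{eNL1:2b2}. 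The paper avoids this by telescoping the difference of the two expansions directly and invoking the pointwise algebraic inequality
\[
\bigl|\,|u_1|^{\alpha-2p}-|u_2|^{\alpha-2p}\,\bigr|\le C\bigl(|u_1|^{-2p-1}+|u_2|^{-2p-1}\bigr)(|u_1|+|u_2|)^\alpha\,|u_1-u_2|,
\]
which only uses the lower bounds on $|u_1|$ and $|u_2|$ separately, never on any point between them. To make your Lipschitz argument rigorous you would need to replace the fundamental-theorem-of-calculus step by such a telescoping argument (or add an unstated smallness assumption on $\|u_1-u_2\|$, which the proposition does not make).
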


\begin{proof}
Without loss of generality, we assume $ \Cstd  =1$.
First of all by \eqref{Besselestimate} we have%
\begin{equation}
\Vert \NL  (  u )  \Vert_ \Spa \leq C \Vert\,|u|^{\alpha
}u\Vert_ \Spa  \label{8}%
\end{equation}
and%
\begin{equation}
\Vert \NL  (  u_{1} )  - \NL  (  u_{2} )
\Vert_ \Spa \leq C \Vert\,|u_{1}|^{\alpha}u_{1}-|u_{2}|^{\alpha}u_{2}%
\Vert_ \Spa . \label{9}%
\end{equation}
Therefore, it is suffices to show that
\begin{equation}
\Vert\,\,|u|^{\alpha}u\Vert_ \Spa \leq C(1+\eta\Vert u\Vert
_ \Spa )^{2{J}}\Vert u\Vert_ \Spa ^{\alpha+1} \label{6}%
\end{equation}
and
\begin{align}
&  \Vert|u_{1}|^{\alpha}u_{1}-|u_{2}|^{\alpha}u_{2}\Vert_{ \Spa 
}\nonumber\\
&  \leq C\left(  (1+\eta(\Vert u_{1}\Vert_ \Spa +\Vert u_{2}%
\Vert_ \Spa )\right)  ^{2{J}+1}(\Vert u_{1}\Vert_ \Spa +\Vert
u_{2}\Vert_{ \Spa })^{\alpha}\Vert u_{1}-u_{2}\Vert_ \Spa .
\label{7}%
\end{align}
First, we calculate $D^{\beta}(|u|^{\alpha}u)$ with $1\leq
|\beta|\leq{J}$. We have
\[
D^{\beta}(|u|^{\alpha}u)=\sum_{\gamma+\rho=\beta}c_{\gamma,\rho}D^{\gamma
}(|u|^{\alpha})D^{\rho}u,
\]
where $c_{\gamma,\rho}$ are given by Leibnitz's rule. We write $|u|^{\alpha
}=(u\overline{u})^{\frac{\alpha}{2}}.$ Thus, the development of $D^{\beta
}(|u|^{\alpha}u)$ contains on the one hand the term
\begin{equation} \label{eNL1:3}
A=  |u|^{\alpha}D^{\beta}u, 
\end{equation}
and on the other hand, terms of the form
\begin{equation}
B=|u|^{\alpha-2p}D^{\rho}u\prod_{j=1}^{p}D^{\gamma_{1,j}}uD^{\gamma_{2,j}%
}\overline{u} \label{eNL1:4}%
\end{equation}
where
\begin{equation*} 
\gamma+\rho=\beta,\quad1\leq p\leq|\gamma|,\quad|\gamma_{1,j}+\gamma
_{2,j}|\geq1,\quad\sum_{j=0}^{p}(\gamma_{1,j}+\gamma_{2,j})=\gamma.
\end{equation*} 
First, we prove \eqref{6}. There are two possibilities. If $|\beta|\leq2{m} - 2 $,
we need to estimate the terms $\langle x\rangle^{n}A$ and $\langle
x\rangle^{n}B$ in $L^{\infty}.$ On the other hand, if $2{m} -1\leq|\beta|\leq
J$, we need to control the terms $\langle x\rangle^{n}A$ and $\langle
x\rangle^{n}B$ in $L^{2}$. Observe that the terms corresponding to $A$
contribute by $\Vert u\Vert_{L^\infty }^{\alpha}\Vert u\Vert_ \Spa .
$ Thus, we see that these terms are controlled by $(1+\eta\Vert
u\Vert_ \Spa )^{2{J}}\Vert u\Vert_ \Spa ^{\alpha+1}.$ Let us
focus on the terms~corresponding to $B.$ Using the lower bound \eqref{eNL1:2}
we have
\[
|u|^{\alpha-2p}\leq\eta^{2p}\langle x\rangle^{2p{n}}|u|^{\alpha}\leq\eta
^{2p}\langle x\rangle^{(2p-\alpha){n}}\Vert u\Vert_ \Spa ^{\alpha},
\]
hence
\begin{equation}
\left\vert B\right\vert \leq\eta^{2p}\langle x\rangle^{(2p-\alpha){n}}\Vert
u\Vert_ \Spa ^{\alpha}|D^{\rho}u|\prod_{j=1}^{p}|D^{\gamma_{1,j}%
}u|\,|D^{\gamma_{2,j}}u|. \label{eNL1:6}%
\end{equation}
We now consider separately the cases $|\beta|\leq2{m-2}$ and $2{m}-1 \leq|\beta|\leq
J.$

\noindent  {\bf The case}  $|\beta|\leq2{m-2}$.  We need to
estimate $\Vert\langle x\rangle^{{n}}B\Vert_{L^{\infty}}$. Since $|\beta
|\leq2{m-2 }$, all the derivatives in the right-hand side of \eqref{eNL1:6} are
also of order less than or equal to\ $2{m-2}$. Hence, all the derivatives in
\eqref{eNL1:6} are estimated by $\langle x\rangle^{-{n}}\Vert u\Vert
_ \Spa $; and so
\begin{equation} \label{eNL1:7}
\left\vert B\right\vert \leq(\eta\Vert u\Vert_ \Spa )^{2p}\langle
x\rangle^{-(\alpha+1){n}}\Vert u\Vert_ \Spa ^{\alpha+1}, 
\end{equation}
so that
\begin{equation*} 
\Vert\langle x\rangle^{{n}}B\Vert_{L^{\infty}}    \leq(\eta\Vert
u\Vert_ \Spa )^{2p}\Vert u\Vert_ \Spa ^{\alpha+1}\\
  \leq C(1+\eta\Vert u\Vert_ \Spa )^{2{J}}\Vert u\Vert_{ \Spa %
}^{\alpha+1}.
\end{equation*}

\medskip

\noindent  {\bf The case}   $2{m}-1\leq\left\vert \beta\right\vert \leq J$. We
need to estimate $\Vert\langle x\rangle^{{n}}B\Vert_{L^{2}}$. Suppose that one
of the derivatives in the right-hand side of~\eqref{eNL1:6} is of order
$\geq2{m} -1$, for instance $|\gamma_{1,1}|\geq2{m}-1$. The sum of the orders of
all derivatives in \eqref{eNL1:6} is equal to $|\beta|$. On the other hand, by
\eqref{fDInt1}, ${k}+{n} \leq2{m} -3 $ and $n\ge 2$, which implies that $\left\vert
\beta\right\vert \leq J=2{m}+2+{k \le }2{m}+{k}+{n} \leq4m -3$. Thus, we conclude
that all other derivatives in \eqref{eNL1:6} must have order $\leq2{m-2}$.
Hence, they are controlled by $\langle x\rangle^{-{n}}\Vert u\Vert
_ \Spa $. Therefore, from \eqref{eNL1:6} we get%
\begin{equation}
\left\vert B\right\vert \leq(\eta\Vert u\Vert_ \Spa )^{2p}\langle
x\rangle^{-\alpha{n}}\Vert u\Vert_ \Spa ^{\alpha}|D^{\gamma_{1,1}}u|.
\label{eNL1:9}%
\end{equation}
Since $2{m}-1\leq\left\vert \gamma_{1,1}\right\vert \leq J$, we estimate
$\Vert\langle x\rangle^{{n}}D^{\gamma_{1,1}}u\Vert_{L^{2}}\leq\Vert
u\Vert_ \Spa $. Hence, from \eqref{eNL1:9} we deduce%
\begin{equation*} 
\Vert\langle x\rangle^{{n}}B\Vert_{L^{2}}    \leq(\eta\Vert u\Vert
_ \Spa )^{2p}\langle x\rangle^{-\alpha{n}}\Vert u\Vert_{ \Spa %
}^{\alpha+1}
  \leq C(1+\eta\Vert u\Vert_ \Spa )^{2{J}}\Vert u\Vert_{ \Spa %
}^{\alpha+1}.
\end{equation*} 
Next, suppose that all the derivatives in the right-hand side
of~\eqref{eNL1:6} are of order $\leq2{m-2}$. In this case, we obtain
\eqref{eNL1:7} again. Multiplying~\eqref{eNL1:7} by $\langle
x\rangle^{{n}}$ we get
\begin{equation}
\langle x\rangle^{{n}}|B|\leq(\eta\Vert u\Vert_ \Spa )^{2p}\langle
x\rangle^{-\alpha{n}}\Vert u\Vert_ \Spa ^{\alpha+1}. \label{eNL1:10}%
\end{equation}
We need to estimate the $L^{2}$ norm of the last inequality. By the definition
of $n$ in \eqref{fDInt1} we have $\alpha{n}>\frac{N}{2}.$ Then, $\langle
x\rangle^{-\alpha{n}}\in L^{2}( \R^N ).$ Thus, it follows from
\eqref{eNL1:10} that%
\[
\left\Vert \langle x\rangle^{{n}}|B|\right\Vert _{L^{2}}\leq C(1+\eta\Vert
u\Vert_ \Spa )^{2{J}}\Vert u\Vert_ \Spa ^{\alpha+1}.
\]
Taking into account all the estimates, we obtain \eqref{6}; and using \eqref{8}, we
deduce \eqref{eNL1:1}.

Let us now prove \eqref{eNL1:2b2}. We develop both $D^{\beta}(|u_{1}|^{\alpha
}u_{1})$ and $D^{\beta}(|u_{2}|^{\alpha}u_{2})$ and use the
expressions~\eqref{eNL1:3} and~\eqref{eNL1:4} to expand the difference
$D^{\beta}(|u_{1}|^{\alpha}u_{1})-D^{\beta}(|u_{2}|^{\alpha}u_{2})$. \ On the
one hand, due to~\eqref{eNL1:3}, we get the term $|u_{1}|^{\alpha}D^{\beta
}u_{1}-|u_{2}|^{\alpha}D^{\beta}u_{2}$. We write this term as%
\begin{equation}
|u_{1}|^{\alpha}D^{\beta}u_{1}-|u_{2}|^{\alpha}D^{\beta}u_{2}=|u_{1}|^{\alpha
}(D^{\beta}u_{1}-D^{\beta}u_{2})+(|u_{1}|^{\alpha}-|u_{2}|^{\alpha})D^{\beta
}u_{2}. \label{10}%
\end{equation}
Similarly to the proof of \eqref{6}, we separate the cases $|\beta|\leq2{m} -2$
and $2{m}- 1\leq\left\vert \beta\right\vert \leq J$ and estimate the
$L^{\infty}$ and $L^{2}$ norms of \eqref{10}, respectively. We see that the
first term in the right-hand side of \eqref{10} can be controlled by $\Vert
u_{1}\Vert_{L^{\infty}}^{\alpha}\Vert u_{1}-u_{2}\Vert_ \Spa $ and
hence by the right-hand side of~\eqref{7}. In turn, the second term in the
right-hand side of \eqref{10} is estimated by $\Vert\,|u_{1}|^{\alpha}%
-|u_{2}|^{\alpha}\Vert_{L^{\infty}}\Vert u_{2}\Vert_ \Spa $. By
\eqref{eNL1:2}
\begin{align*}
|\,|u_{1}|^{\alpha}-|u_{2}|^{\alpha}|  &  \leq C(|u_{1}|^{-1}+|u_{2}%
|^{-1})(|u_{1}|+|u_{2}|)^{\alpha}|u_{1}-u_{2}|\\
&  \leq C\eta\langle x\rangle^{n}(|u_{1}|+|u_{2}|)^{\alpha}|u_{1}-u_{2}|\\
&  \leq C\eta(\Vert u_{1}\Vert_ \Spa +\Vert u_{2}\Vert_{ \Spa %
})^{\alpha}\Vert u_{1}-u_{2}\Vert_ \Spa ,
\end{align*}
and we obtain again a term which is controlled by the right-hand side of~\eqref{7}. 

Let us now estimate the terms that correspond to the difference of terms of the
form \eqref{eNL1:4} for $u_{1}$ and $u_{2}$. Each of these terms can be
written as
\begin{equation}
(|u_{1}|^{\alpha-2p}-|u_{2}|^{\alpha-2p})D^{\rho}u_{2}\prod_{j=1}^{p}%
D^{\gamma_{1,j}}u_{2}D^{\gamma_{2,j}}\overline{u_{2}} \label{fSPb2}%
\end{equation}
plus a sum of terms of the form
\begin{equation}
|u_{1}|^{\alpha-2p}D^{\rho}w\prod_{j=1}^{p}D^{\gamma_{1,j}}w_{1,j}%
D^{\gamma_{2,j}}\overline{w_{2,j}} \label{fSPb3}%
\end{equation}
where $w$, $w_{1,j}$, $w_{2,j}$ are all equal to either $u_{1}$ or $u_{2}$,
except one of them which is equal to $u_{1}-u_{2}$. The terms of the form
\eqref{fSPb3} are controlled by the right-hand side of~\eqref{7}, by using
\eqref{eNL1:2}. To estimate the term~\eqref{fSPb2} we use that
\begin{align*}
|\,|u_{1}|^{\alpha-2p}-|u_{2}|^{\alpha-2p}|  &  \leq C(|u_{1}|^{-2p-1}%
+|u_{2}|^{-2p-1})(|u_{1}|+|u_{2}|)^{\alpha}|u_{1}-u_{2}|\\
&  \leq C\eta^{2p+1}\langle x\rangle^{(2p+1){n}}(|u_{1}|+|u_{2}|)^{\alpha
}|u_{1}-u_{2}|\\
&  \leq C\eta^{2p+1}\langle x\rangle^{(2p-\alpha){n}}(\Vert u_{1}%
\Vert_ \Spa +\Vert u_{2}\Vert_{ \Spa })^{\alpha}\Vert
u_{1}-u_{2}\Vert_ \Spa .
\end{align*}
Then, proceeding as in the proof of \eqref{6}, we control \eqref{fSPb3} by the
right-hand side of~\eqref{7}. Thus, we see that \eqref{7} hold. Using
\eqref{9} we get \eqref{eNL1:2b2}. This completes the proof of Proposition
\ref{eNL1}.
\end{proof}

Now, we estimate 
\begin{equation} \label{fDfnNLD} 
\NL _1 (\Psi )  = \Cstt  | \Psi | ^{\alpha}  \Psi 
\end{equation} 
in the space $ \SpaD $.

\begin{proposition} \label{DiracNonlinear}
Let $\alpha>0$ and assume~\eqref{fDInt1}-\eqref{fSpa1b1}. 
Let  $ \SpaD $ be defined by
\eqref{fSpa1}-\eqref{fSpa2}, \eqref{fDfnell} and~\eqref{fDfnSpatilde}, and let $\NL_1 (\Psi )$ be defined by~\eqref{fDfnNLD}.  
For every $\eta>0$ and $\Psi \in \SpaD $ satisfying
\begin{equation} \label{kg6}
\eta\inf_{x\in \R^N } \langle x\rangle^{n}|\Psi(x)| \geq1
\end{equation}
it follows that $\NL _1 ( \Psi ) \in \SpaD $. Moreover, there exists a constant $C>0$ such that
\begin{equation*} 
 \| \NL _1 (\Psi ) \|_\SpaD \le C (1 + \eta  \| \Psi \|_\SpaD )^{2J} \| \Psi \|_\SpaD ^{\alpha +1} ,
\end{equation*} 
and 
\begin{equation*} 
\begin{split} 
 \| \NL _1 (\Psi _1) - & \NL _1 (\Psi _2) \|_\SpaD \\ &
  \le C (1 + \eta  ( \| \Psi _1 \|_\SpaD +  \| \Psi _2 \|_\SpaD ) )^{2J +1} (   \| \Psi _1 \|_\SpaD +  \| \Psi _2 \|_\SpaD  )^{\alpha }   \| \Psi _1 - \Psi _2\|_\SpaD ,
\end{split} 
\end{equation*} 
for all $\eta>0$ and $ \Psi , \Psi_{1},\Psi_{2}\in \SpaD$ satisfying~\eqref{kg6}.
\end{proposition}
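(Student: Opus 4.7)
The plan is to mimic the proof of Proposition~\ref{eNL1}, with the simplification that $\NL_1(\Psi) = \Cstt |\Psi|^\alpha \Psi$ is purely local and contains no Bessel potential, so there is no need to invoke Lemma~\ref{BE} and we may estimate $\NL_1(\Psi)$ directly in $\SpaD$ rather than reducing first to $\||\Psi|^\alpha \Psi\|_\SpaD$. Without loss of generality assume $\Cstt = 1$. By Leibniz's rule, for $1 \le |\beta| \le J$,
\begin{equation*}
D^\beta(|\Psi|^\alpha \Psi) = \sum_{\gamma + \rho = \beta} c_{\gamma,\rho} D^\gamma(|\Psi|^\alpha) D^\rho \Psi,
\end{equation*}
and writing $|\Psi|^\alpha = \langle \Psi, \Psi\rangle^{\alpha/2}$ (with $\langle\cdot,\cdot\rangle$ the scalar product on $\C^\CDiracd$) and expanding, one obtains a principal term
\begin{equation*}
A = |\Psi|^\alpha D^\beta \Psi
\end{equation*}
together with finitely many terms of the form
\begin{equation*}
B = |\Psi|^{\alpha - 2p} D^\rho \Psi \prod_{j=1}^{p} D^{\gamma_{1,j}}\Psi \cdot D^{\gamma_{2,j}} \overline{\Psi},
\end{equation*}
where $\gamma + \rho = \beta$, $1 \le p \le |\gamma|$, $|\gamma_{1,j} + \gamma_{2,j}| \ge 1$ and $\sum_j (\gamma_{1,j} + \gamma_{2,j}) = \gamma$. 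Exactly as in the Klein-Gordon case, the $A$ term contributes at most $\|\Psi\|_{L^\infty}^\alpha \|\Psi\|_\SpaD$, which is controlled by the right-hand side of the desired estimate.

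For the $B$ terms, the key observation is that assumption~\eqref{kg6} gives $|\Psi|^{-1} \le \eta \langle x\rangle^n$ pointwise, so
\begin{equation*}
|\Psi|^{\alpha - 2p} \le \eta^{2p} \langle x\rangle^{2pn} |\Psi|^\alpha \le \eta^{2p} \langle x\rangle^{(2p-\alpha)n} \|\Psi\|_\SpaD^\alpha.
\end{equation*}
I then split into the two cases $|\beta| \le 2m-2$ (where the $\langle x\rangle^n L^\infty$ norm is used) and $2m-1 \le |\beta| \le J$ (where the $\langle x\rangle^n L^2$ norm is used), exactly as in the proof of Proposition~\ref{eNL1}. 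In the first case all derivatives in $B$ are of order at most $2m-2$, so each factor $D^{\gamma_{\cdot,\cdot}}\Psi$ and $D^\rho \Psi$ is bounded by $\langle x\rangle^{-n}\|\Psi\|_\SpaD$, yielding $\langle x\rangle^n |B| \le (\eta\|\Psi\|_\SpaD)^{2p}\|\Psi\|_\SpaD^{\alpha+1}$. In the second case, the inequality $J \le 4m-3$ (which follows from~\eqref{fDInt1} as noted in the proof of Proposition~\ref{eNL1}) forces at most one derivative in $B$ to have order $\ge 2m-1$; that factor is controlled in weighted $L^2$ by $\|\Psi\|_\SpaD$, the remaining factors are controlled in weighted $L^\infty$, and the extra weight $\langle x\rangle^{-\alpha n}$ lies in $L^2$ since $\alpha n > N/2$ by~\eqref{fDInt1}. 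This yields the first estimate.

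For the Lipschitz-type estimate, I expand $D^\beta(|\Psi_1|^\alpha \Psi_1) - D^\beta(|\Psi_2|^\alpha \Psi_2)$ using the same Leibniz development. The principal difference splits as
\begin{equation*}
|\Psi_1|^\alpha D^\beta \Psi_1 - |\Psi_2|^\alpha D^\beta \Psi_2 = |\Psi_1|^\alpha D^\beta(\Psi_1 - \Psi_2) + (|\Psi_1|^\alpha - |\Psi_2|^\alpha) D^\beta \Psi_2,
\end{equation*}
and the $B$-type differences split into a telescoping sum of terms where exactly one factor of $\Psi_j$ is replaced by $\Psi_1 - \Psi_2$, plus a term involving $|\Psi_1|^{\alpha - 2p} - |\Psi_2|^{\alpha - 2p}$. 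The pointwise inequalities
\begin{equation*}
||\Psi_1|^\alpha - |\Psi_2|^\alpha| \le C \eta \langle x\rangle^n (|\Psi_1| + |\Psi_2|)^\alpha |\Psi_1 - \Psi_2|
\end{equation*}
and
\begin{equation*}
||\Psi_1|^{\alpha - 2p} - |\Psi_2|^{\alpha - 2p}| \le C \eta^{2p+1} \langle x\rangle^{(2p+1)n} (|\Psi_1| + |\Psi_2|)^\alpha |\Psi_1 - \Psi_2|,
\end{equation*}
which follow from~\eqref{kg6} exactly as in the proof of Proposition~\ref{eNL1}, together with the same $L^\infty$/$L^2$ case split, yield the second estimate. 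The only novelty compared to Proposition~\ref{eNL1} is bookkeeping with the $\C^\CDiracd$ scalar product in place of complex multiplication, which does not affect any of the pointwise bounds; I expect no genuine obstacle. The main technical nuisance is simply tracking the powers of $\eta$ and $\|\Psi\|_\SpaD$ through the many $B$-type terms, which produces the factor $(1 + \eta \|\Psi\|_\SpaD)^{2J}$ (respectively $(1 + \eta(\|\Psi_1\|_\SpaD + \|\Psi_2\|_\SpaD))^{2J+1}$ in the Lipschitz estimate).
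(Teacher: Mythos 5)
Your proof is correct and follows the same strategy as the paper's; the paper, however, disposes of this proposition in a single sentence by observing that the proof of Proposition~\ref{eNL1} only uses the specific form of $\NL$ through the reductions~\eqref{8}--\eqref{9}, which for $\NL_1(\Psi)=\Cstt|\Psi|^\alpha\Psi$ hold trivially (with equality up to a constant), so the entire argument carries over verbatim. You instead re-derive the estimate from scratch, replacing $u\overline{u}$ by $\langle\Psi,\Psi\rangle$, which is a valid but longer route to the same conclusion; your remark about the $\C^{\CDiracd}$ scalar-product bookkeeping is the only point the paper glosses over, and you correctly note it causes no difficulty.
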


\begin{proof}
The proof of Proposition \ref{eNL1} uses only formulas~\eqref{8} and~\eqref{9}.
Since $\NL _1$ clearly satisfy these, the result follows. 
\end{proof}

\section{Proofs of Theorems~\ref{eThm1b1} and \ref{ThDirac} \label{sNLS}}

We are now in position to prove our main results. Theorem~\ref{eThm1b1} will be
consequence of the following existence result for the Cauchy problem
\begin{equation} \label{11}
\begin{cases} 
i\partial_{t}u- \gamma \left\langle i\nabla\right\rangle u=  \LINL  ( u )  , \\
u (  0 )  =u_{0},
\end{cases} 
\end{equation}
where
\begin{equation*} 
\LINL (u)= \LI (u) + \NL (u) ,
\end{equation*} 
which we study in the equivalent form (Duhamel's formula)
\begin{equation} \label{fIE1} 
u(t)= e^{ \mns i t \gamma \langle i\nabla \rangle } u_0 - i\int_{0}^{t} e^{ \mns i(t-s) \gamma \langle i\nabla \rangle } 
\LINL \left(  u\right)  \,ds .
\end{equation} 

\begin{proposition}
\label{eNLS1} Let $\alpha>0$ and $  \Cstd  \in{ \C }$.
Assume~\eqref{fDInt1}-\eqref{fSpa1b1} and let the space $ \Spa = \Spa _2$
 be defined by \eqref{fSpa1}-\eqref{fSpa2} and~\eqref{fDfnSpa}. If $u_{0}\in \Spa $
satisfies
\begin{equation}  \label{eThm1:1}
\inf_{x\in \R^N }\langle x\rangle^{n}|u_{0}(x)|>0 ,
\end{equation}
then there exist $T > 0$ and a unique solution $u\in C([-T,T], \Spa )$ of~\eqref{11}.
Moreover,
\begin{equation} \label{eThm1:2}
\inf _{ -T\le t\le T } \inf_{x\in \R^N }\langle x\rangle^{n}|u (t, x)| > 0 .
\end{equation} 
\end{proposition}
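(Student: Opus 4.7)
The plan is a contraction mapping argument applied to Duhamel's formula~\eqref{fIE1} in a complete metric space that enforces both an upper $\Spa$-norm bound and a strictly positive weighted lower bound. Fix $\eta_0>0$ such that $\eta_0^{-1}\le\frac{1}{2}\inf_x\langle x\rangle^n|u_0(x)|$, and set $M=2C_L(1+1)^{2m+n+1}\Vert u_0\Vert_\Spa$ where $C_L$ is the constant in~\eqref{eLE3:11}. For $T\in(0,1]$ to be chosen later, I would work in
\begin{equation*}
\Ens_T=\Bigl\{u\in C([-T,T],\Spa):\;\sup_{|t|\le T}\Vert u(t)\Vert_\Spa\le M,\;\inf_{|t|\le T}\inf_{x\in\R^N}\langle x\rangle^n|u(t,x)|\ge\eta_0^{-1}\Bigr\},
\end{equation*}
with distance inherited from $C([-T,T],\Spa)$. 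Because $\Spa\hookrightarrow L^\infty(\R^N)$ (via the weighted $L^\infty$ part of the norm) and both constraints are closed under the $C([-T,T],\Spa)$ topology, $\Ens_T$ is a nonempty complete metric space (the constant-in-time map $u_0$ lies in $\Ens_T$).

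Next I would define $\Phi(u)(t)=e^{-it\gamma\langle i\nabla\rangle}u_0-i\int_0^t e^{-i(t-s)\gamma\langle i\nabla\rangle}\LINL(u(s))\,ds$ and show $\Phi:\Ens_T\to\Ens_T$. The linear piece $\LI(u)$ is bounded on $\Spa$ by Lemma~\ref{BE} applied coordinatewise, and the nonlinear piece is estimated by Proposition~\ref{eNL1} with $\eta=\eta_0$, which is valid throughout $\Ens_T$. Combining with the linear estimate~\eqref{eLE3:11} yields
\begin{equation*}
\sup_{|t|\le T}\Vert\Phi(u)(t)\Vert_\Spa\le C_L(1+T)^{2m+n+1}\Vert u_0\Vert_\Spa+C T(1+T)^{2m+n+1}(1+\eta_0 M)^{2J}(M+M^{\alpha+1}),
\end{equation*}
so $\sup\Vert\Phi(u)(t)\Vert_\Spa\le M$ for $T$ small. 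For the lower bound, I would use~\eqref{eLE3:12} to get
\begin{equation*}
\sup_{|t|\le T}\Vert\langle x\rangle^n(e^{-it\gamma\langle i\nabla\rangle}u_0-u_0)\Vert_{L^\infty}\le CT(1+T)^{2m+n+1}\Vert u_0\Vert_\Spa,
\end{equation*}
and combine it with a similar $L^\infty$-in-$x$ bound on the Duhamel integral, obtained by pulling $\langle x\rangle^n$ inside and applying~\eqref{eLE3:11} to the integrand $\LINL(u(s))\in\Spa$ (controlled by $M$ and $\eta_0$). For $T$ sufficiently small this gives $\sup_{|t|\le T}\Vert\langle x\rangle^n(\Phi(u)(t)-u_0)\Vert_{L^\infty}\le\frac{1}{2}\eta_0^{-1}$, hence $\inf\langle x\rangle^n|\Phi(u)(t,x)|\ge\eta_0^{-1}$.

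Contraction follows by applying~\eqref{eNL1:2b2} to two elements $u_1,u_2\in\Ens_T$ (both satisfying the lower bound with $\eta_0$), together with linearity of $\LI$ and Lemma~\ref{BE}:
\begin{equation*}
\sup_{|t|\le T}\Vert\Phi(u_1)(t)-\Phi(u_2)(t)\Vert_\Spa\le C T(1+T)^{2m+n+1}(1+2\eta_0 M)^{2J+1}(2M)^\alpha\sup_{|t|\le T}\Vert u_1(t)-u_2(t)\Vert_\Spa,
\end{equation*}
which is a strict contraction for $T$ small enough. Banach's fixed-point theorem then produces a unique fixed point $u\in\Ens_T$, which is the desired solution, and~\eqref{eThm1:2} is built into $\Ens_T$ with $\eta=\eta_0^{-1}$. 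Uniqueness among all $C([-T,T],\Spa)$ solutions satisfying a lower bound of the form~\eqref{eThm1:2} (after possibly shrinking $T$) follows by the same contraction estimate applied to any two such solutions.

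The principal obstacle is preserving the quantitative lower bound $\inf\langle x\rangle^n|u(t,x)|\ge\eta_0^{-1}$ under iteration: Proposition~\ref{eNL1} provides nonlinear control only when such a bound holds \emph{uniformly in $t$ with a fixed $\eta$}, so the lower bound must be baked into the metric space and then re-established for $\Phi(u)$. This is exactly what~\eqref{eLE3:12} is designed for, and it is the reason the argument requires the small-time regime.
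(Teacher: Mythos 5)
Your existence argument is essentially the paper's: a contraction mapping for the Duhamel map in a complete metric subset of $C([-T,T],\Spa)$ that simultaneously enforces an $\Spa$-norm upper bound and a uniform weighted lower bound, using \eqref{eLE3:11} and \eqref{eLE3:12} for the free evolution, Lemma~\ref{BE} for the linear term $\LI$, Proposition~\ref{eNL1} for $\NL$, and the pointwise bound $\langle x\rangle^n|v(x)|\le\Vert v\Vert_\Spa$ (the $\beta=0$ piece of the norm) to close the lower-bound preservation. That all matches.

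The gap is in the uniqueness claim. The Proposition asserts uniqueness in $C([-T,T],\Spa)$, unconditionally. Your argument establishes uniqueness only among solutions that also satisfy a lower bound of type~\eqref{eThm1:2}, since the Lipschitz estimate~\eqref{eNL1:2b2} you invoke is only available under~\eqref{eNL1:2} for both competitors; a putative second solution in $C([-T,T],\Spa)$ with no positivity property, or a larger norm, is not covered. The paper handles this by a separate, lower-regularity argument: since $\Spa\hookrightarrow H^J\hookrightarrow L^\infty$, and $z\mapsto|z|^\alpha z$ is Lipschitz on bounded subsets of $\C$ for any $\alpha>0$, one gets $\Vert\NL(u_1)-\NL(u_2)\Vert_{L^2}\le C\Vert u_1-u_2\Vert_{L^2}$ with no lower bound needed; combined with $\Vert\LI(u_1)-\LI(u_2)\Vert_{L^2}\le C\Vert u_1-u_2\Vert_{L^2}$ and the $L^2$-isometry of $e^{-it\gamma\langle i\nabla\rangle}$, Gronwall's inequality gives $u_1\equiv u_2$ for any two $C([-T,T],\Spa)$ solutions. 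You should add this $L^2$-Gronwall step (or an equivalent lower-norm uniqueness argument) to obtain the stated uniqueness; the contraction alone does not deliver it.
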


\begin{proof}
We first prove uniqueness. Suppose $T>0$ and $u_1, u_2 \in C([-T, T], \Spa ) $ are two solutions of~\eqref{fIE1}.  
Using~\eqref{fdefnnl}, \eqref{Besselestimate} (with $n=0$), and $\Spa \hookrightarrow H^J (\R^N ) \hookrightarrow L^\infty  (\R^N ) $ we see that
\begin{equation*} 
 \| \NL (u_1) - \NL (u_2)  \| _{ L^2 } \le C  \| \,  | \mathbf{a} \cdot u_1 |^\alpha  \mathbf{a} \cdot u_1 - | \mathbf{a} \cdot u_2 |^\alpha  \mathbf{a} \cdot u  _2 \| _{ L^2 } \le C  \| u_1 - u_2 \| _{ L^2 },
\end{equation*} 
and
\begin{equation*} 
 \| \LI (u ) \| _{ L^2 } \le C   \| u  \| _{ L^2 }. 
\end{equation*} 
Since $(  e^{ \mns i t \gamma \langle i\nabla \rangle } )  _{ t\in \R }$ is a group of isometries on $L^2 (\R^N ) $, we deduce that
\begin{equation*} 
 \| u_1 (t)- u_2 (t) \| _{ L^2 } \le C  \Bigl| \int _0^t  \| u_1 (s)- u_2 (s) \| _{ L^2 } ds  \Bigr|,
\end{equation*} 
and uniqueness follows by Gronwall's inequality.

Next, we use the linear estimates of Proposition~\ref{eLE3} and the nonlinear
estimates of Proposition~\ref{eNL1} to prove the local existence result by a
contraction mapping argument. We let
\[
\eta>0,\quad K>0,\quad0<T \leq t_{0},
\]
where $t_{0}$ is given by Proposition \ref{eLE3}. We define the set $ \Ens $ by
\[%
\begin{split} 
\Ens =\{u\in C([-T,T], \Spa );  &  \,\Vert u\Vert_{L^{\infty
}( (-T,T), \Spa )}\leq K \\ & \text{ and }
\eta \inf  _{ x\in \R^N  }\langle x\rangle^{n}|u(t,x)|\geq1    \text{ for }-T<t<T  \} ,
\end{split} 
\]
so that $\Ens $ equipped with the distance $ \dist (u,v)=\Vert u-v\Vert
_{L^{\infty}((-T,T), \Spa )}$ is a complete metric space. For given
$u\in \Ens $ and $u_{0}\in \Spa $, we set%
\[
\Phi_{u}(t)= - i\int_{0}^{t} e^{ \mns i(t-s) \gamma  \langle i\nabla \rangle
} \LINL  (  u (s) )  \,ds
\]
and
\[
\Psi_{ u_{0},u}(t)= e^{ \mns it\gamma\left\langle i\nabla\right\rangle }u_{0}+\Phi
_{u}(t)
\]
for $-T<t<T$. By the definition of $\Ens $ and Proposition~\ref{eNL1}
we see that if $u\in \Ens $, then $ \NL \left(  u\right)  \in
C([-T,T], \Ens )$ and
\begin{equation}
\Vert  \NL  (  u )  \Vert_{L^{\infty}( (-T,T), \Spa 
)}\leq C(1+\eta K)^{2{J}}K^{\alpha+1}. \label{fSp1b}
\end{equation}
moreover, by~\eqref{Besselestimate}, 
\begin{equation}  \label{fSp1d}
\LI  \in  {\mathcal L} ( \Spa ), 
\end{equation} 
so that
\begin{equation}
\Vert  \LI  (  u )  \Vert_{L^{\infty}( (-T,T), \Spa 
)}\leq CK . \label{fSp1c}
\end{equation}
In addition, it follows from Proposition~\ref{eLE3} and the semigroup property that $\Phi_{u}\in
C([-T,T], \Spa )$. From~\eqref{eLE3:11}, \eqref{fSp1b} and~\eqref{fSp1c}
we estimate
\begin{equation}
\Vert\Phi_{ u }\Vert_{L^{\infty}( (-T,T), \Spa )}\leq C T K_{t_{0}} [ K +  
  (1+\eta K)^{2{J}}K^{\alpha+1} ] \label{fNLS4}%
\end{equation}
and
\begin{equation}
\Vert\Psi_{ u_{0}, u }\Vert_{L^{\infty}( (-T,T), \Spa )}\leq CK_{t_{0}%
}\left(  \Vert u_{0}\Vert_ \Spa + TK +  T(1+\eta K)^{2{J}}%
K^{\alpha+1}\right)  , \label{fNLS5}%
\end{equation}
where 
\[
K_{t_{0}}=(1+ t_{0} )^{ 2m + n +1}.
\]
Arguing similarly and using~\eqref{eNL1:2b2}, we estimate
\begin{equation}
\Vert\Phi_{v}-\Phi_{w}\Vert_{L^{\infty}( (-T,T), \Spa )}\leq C T K_{t_{0}%
} [1 + (1+\eta K)^{2{J}+1}K^{\alpha} ] \dist (v,w), \label{fNLS6}%
\end{equation}
for all $v,w\in \Ens $. Next, using~\eqref{eLE3:12}, \eqref{fNLS4} and the inequality $ \langle \cdot \rangle ^n  |u (\cdot )| \le  \| u \|_\Spa $, we see that
\begin{equation} \label{fNLS7}
\begin{split} 
\langle x\rangle^{n}|\Psi_{u_{0},u}(t,x)| & \ge \inf _{ x\in \R^N  } \langle x\rangle^{n}  | u_0 (x)| - C T K _{ t_0 }  \|u_0 \|_\Spa -  \| \Phi _u \| _\Spa \\ 
 \ge \inf _{ x\in \R^N  } \langle x\rangle^{n} & | u_0 (x)| - C T K _{ t_0 } (  \|u_0 \|_\Spa + K + (1+\eta K)^{2{J}}K^{\alpha+1}) ;
\end{split} 
\end{equation} 
Having all the necessary estimates, we now argue as follows. Let $u_{0}%
\in \Spa $ be such that $\inf_{x\in \R^N }\langle
x\rangle^{n}|u_{0}(x)|>0$. We let
\begin{align}
\eta &  =2(\inf_{x\in \R^N }\langle x\rangle^{n}|u_{0}%
(x)|)^{-1}\label{fNLS8}\\
K  &  =2 \widetilde{C}  K_{t_{0}}\Vert u_{0}\Vert_ \Spa . \label{fNLS9}%
\end{align}
where $ \widetilde{C}  $ is the supremum of the constants $C$
in~\eqref{fNLS4}--\eqref{fNLS7}. In particular we see that $u(t)\equiv u_{0}$
belongs to $\Ens $, so that $ \Ens  \not =\emptyset$. We let $T\in (0, t_0]$
be sufficiently small so that
\begin{gather}
 \widetilde{C} T  K_{t_{0}} [ 1+  (1+\eta K)^{2{J}+1}K^{\alpha } ] \leq\frac{1}%
{2}\label{fNLS10}\\
 \widetilde{C} T K_{t_{0}} (  \Vert u_{0}\Vert_ \Spa + K + (1+\eta
K)^{2{J}}K^{\alpha+1} )  \leq\frac{1}{\eta}. \label{fNLS11}%
\end{gather}
Then, applying~\eqref{fNLS5}, \eqref{fNLS9} and~\eqref{fNLS10} we obtain
\[
\Vert\Psi_{u_{0},u}\Vert_{L^{\infty}( (-T,T), \Spa )}\leq K .
\]
Moreover, inequalities~\eqref{fNLS7}, \eqref{fNLS8}
and~\eqref{fNLS11} imply that
\begin{equation} \label{fEstin1} 
\eta \inf  _{ x\in \R^N  } \langle x\rangle^{n}|\Psi_{u_{0},u}(t,x)|\geq1
\end{equation} 
for $-T\leq t\leq T$. It follows that $\Psi_{u_{0},u}\in  \Ens $ for all $u\in  \Ens $.
Using \eqref{fNLS6} and~\eqref{fNLS10} we deduce that the map $u\mapsto
\Psi_{u_{0},u}$ is a strict contraction $ \Ens  \rightarrow  \Ens $. Therefore, it has a fixed point, which is a solution of~\eqref{fIE1}, and estimate~\eqref{eThm1:2}  follows from~\eqref{fEstin1}. 
This completes the proof.  
\end{proof}

\begin{proof}
[Proof of Theorem $\ref{eThm1b1}$]Consider the problem \eqref{KGCauchy}. Suppose
that the initial data are such that $w_{0}\in \Spa _{1}$, $ \langle
i\nabla \rangle ^{-1} w _{1}\in \Spa _{1}$ and \eqref{BoundBelow}
holds. It follows that $u_0$ defined by
\begin{equation} \label{fDfnuz} 
u_0 =\frac{1}{2} ( w_0  \mathbf{a} + i [ \langle i\nabla \rangle
^{-1} w_1]  \mathbf{b}  ) 
\end{equation} 
belongs to $\Spa$. 
Moreover, using~\eqref{fDfnab},
\begin{equation*} 
 | u_0|^2 = 2 (  |w_0|^2 +  |  \langle i\nabla \rangle
^{-1} w_1 |^2 )
\end{equation*} 
 so that $u_{0}$ satisfies \eqref{eThm1:1}. It follows from Proposition
\ref{eNLS1} that there exist $T>0$ and a solution
\begin{equation}
u\in C([-T,T], \Spa ) \label{12}%
\end{equation}
of \eqref{11}. 
Since $\Spa \hookrightarrow H^J (\R^N ) $ by~\eqref{fSpa1:b2} we have $ \gamma \langle i \nabla \rangle u\in C([-T, T], H^{J-1} (\R^N ) ) $.  Moreover, $\LINL (u)\in C ([-T, T], \Spa )$ by Proposition~\ref{eNL1} and~\eqref{fSp1d}. Equation~\eqref{11}, yields $u\in C^1 ( [-T, T], H^{J-1} (\R^N ) ) $, so that $ \gamma \langle i \nabla \rangle u\in C^1 ([-T, T], H^{J-2} (\R^N ) ) $.
In addition, one verifies easily that $\NL (u) \in C^1 ( [-T, T], H^{J-1} (\R^N ) ) $ and
\begin{equation*} 
\partial _t \NL (u) = - \frac { \Cstd  } {2}  \Bigl[ \langle i\nabla \rangle ^{-1} \Bigl(  \frac {\alpha +1} {2} | \mathbf{a} \cdot u |^\alpha  \mathbf{a} \cdot \partial _t u +  \frac {\alpha } {2} | \mathbf{a} \cdot u |^{\alpha -2} (\mathbf{a} \cdot u)^2 \mathbf{a} \cdot \partial _t  \overline{u}   \Bigr) \Bigr]  \mathbf{b} .
\end{equation*} 
Furthermore, $\LI  \in {\mathcal L} (H^{J-1} (\R^N ) )$ by~\eqref{Besselestimate} (with $n=0$ and $ p =2$), so that  $\LI (u) \in C^1 ( [-T, T], H^{J-1} (\R^N ) ) $.
Using again equation~\eqref{11}, we conclude that $u\in C^2 ([-T, T], H^{J-2} (\R^N ) )$. 

We now define
\begin{equation}  \label{13:b1}
w \in C ([-T, T], \Spa _1 ) \cap C^2 ([-T, T], H^{J-2} (\R^N ) )
\end{equation}  
by
\begin{equation}
w=\mathbf{a} \cdot u. \label{13}%
\end{equation}
Since $J-2 > \frac {N} {2}$, we have in particular $w \in  C^2 ( [-T, T] \times \R^N )$. 
Next, note that
\begin{equation}
\gamma ^2 = I,\quad 
\gamma \mathbf{a}=\mathbf{b}, \quad  \mathbf{a}
\cdot \mathbf{b}=0,\quad \gamma \mathbf{b} = \mathbf{a},\quad \mathbf{a} \cdot \mathbf{a}=2. \label{id}%
\end{equation}
Using equation~\eqref{11} and~\eqref{id} we see that 
\begin{equation*} 
\begin{split} 
w _{ tt }- \Delta w + w & =  \mathbf{a} \cdot  ( u  _{ tt }- \Delta u + u ) \\
& =  - \mathbf{a} \cdot  [ (i\partial _t + \gamma \langle i\nabla \rangle )  (i\partial _t u - \gamma \langle i\nabla \rangle u)  ]\\
& = -  \mathbf{a} \cdot  [ (i\partial _t + \gamma \langle i\nabla \rangle ) \LINL (u) ] 
 =  - \mathbf{a} \cdot  [  \gamma \langle i\nabla \rangle  \LINL (u) ]  \\
& = -  \frac { \Cstu -1  } {2} \mathbf{a} \cdot  [  \gamma  (   w \mathbf{b} ) ]   +  \frac { \Cstd  } {2} \mathbf{a} \cdot  [  \gamma  (  |w|^\alpha w \mathbf{b} ) ] \\
&  =  (1- \Cstu ) w+  \Cstd  |w|^\alpha w .
\end{split} 
\end{equation*} 
Moreover, by~\eqref{fDfnuz}  and~\eqref{id},
\[
w  (  0 )  =\mathbf{a} \cdot u _0
= \frac{1}{2} \mathbf{a} \cdot  ( w_0  \mathbf{a} + i [ \langle i\nabla \rangle
^{-1} w_1]  \mathbf{b}  ) 
= w_{0}.
\]
Similarly, using~\eqref{11}, \eqref{fDfnuz}  and~\eqref{id},
\begin{equation*} 
\begin{split} 
w_t (0) & = \mathbf{a} \cdot  \partial _t u(0) =-i  \mathbf{a} \cdot  ( \gamma \left\langle i\nabla\right\rangle u_0 + \LINL  ( u_0 ) ) \\
& = -i \mathbf{a} \cdot  ( \gamma \left\langle i\nabla\right\rangle u_0  )
=-  \frac {i} {2}  \mathbf{a} \cdot  ( \gamma  [  \langle i\nabla \rangle w_0 \mathbf{a} + i w_1 \mathbf{b}]  ) = w_1.
\end{split} 
\end{equation*} 
Thus we see that $v$ solves \eqref{KGCauchy}. 
This proves the existence part. 

Since $\Spa _1 \hookrightarrow L^\infty  (\R^N ) $, 
uniqueness easily follows from standard energy estimates. 

Finally, suppose that $w_0 $ satisfies \eqref{14}. Taking the scalar product of equation \eqref{11} with
$\mathbf{a} $, integrating in time and
using \eqref{id} and \eqref{13}, we obtain
\begin{equation*} 
 w (  t )  = w _{0}-i\int_{0}^{t} \langle i\nabla \rangle
\mathbf{b  } \cdot u.
\end{equation*} 
Then,%
\begin{equation*} 
\begin{split} 
\inf_{x\in \R^N }\langle x\rangle^{n} \vert w (  t )
 \vert & \geq\inf_{x\in \R^N }\langle x\rangle^{n} w_{0}%
-t\Vert\langle x\rangle^{n}\left\langle i\nabla\right\rangle  u\Vert_{L^{\infty}((0,t)\times \R^N )} \\
& = \inf_{x\in \R^N }\langle x\rangle^{n} w_{0}%
-t\Vert\langle x\rangle^{n}\left\langle i\nabla\right\rangle ^{-1}\left(
1- \Delta \right)  u\Vert_{L^{\infty}((0,t)\times \R^N )}  . 
\end{split} 
\end{equation*} 
By Lemma \ref{BE} 
\begin{equation*} 
\Vert\langle x\rangle^{n}\left\langle i\nabla\right\rangle
^{-1}\left(  1- \Delta \right)  u\Vert_{L^{\infty}((0,t)\times{ \R 
}^{N})}\leq C\Vert\langle x\rangle^{n}\left(  1- \Delta \right)
u\Vert_{L^{\infty}((0,t)\times \R^N )}.
\end{equation*} 
Using \eqref{12} we see that there is $0<T_{1}\le T $ such that \eqref{15} holds. 
\end{proof}

Finally, we complete the proof of Theorem \ref{ThDirac}.

\begin{proof} [Proof of Theorem~$\ref{ThDirac}$]
We use Duhamel's formula to reformulate equation~\eqref{Dirac} in the equivalent form
\begin{equation*} 
\Psi(t)=e^{-itH}\Psi_{0}-i\int_{0}^{t}e^{-i(t-s)H} \NL_1 ( \Psi (s)) \,ds,
\end{equation*} 
where $\NL_1 (\Psi )$ is given by~\eqref{fDfnNLD}.  
Theorem \ref{ThDirac} now follows from a standard contraction mapping argument 
(exactly as in the proof of Proposition \ref{eNLS1}) based on  the linear estimates of Proposition~\ref{eLE3Dirac} and the nonlinear estimates of Proposition~\ref{DiracNonlinear}.
\end{proof}

\end{document}